\theoremstyle{definition}
\newtheorem{theorem}{Theorem}[section]
\newtheorem*{theorem*}{Theorem}
\newtheorem{lemma}[theorem]{Lemma}
\newtheorem{proposition}[theorem]{Proposition}
\newtheorem{corollary}[theorem]{Corollary}
\newtheorem{definition}[theorem]{Definition}
\newtheorem{remark}[theorem]{Remark}
\newcommand\DN{\newcommand}
\DN\ts{\times}
\DN\ots{\otimes}
\DN\bra{\langle}
\DN\ket{\rangle}
\DN\ms{\medskip}
\DN\st{\,;\,}
\DN\lref[1]{Lemma~\ref{#1}}
\DN\tref[1]{Theorem~\ref{#1}}
\DN\pref[1]{Proposition~\ref{#1}}
\DN\sref[1]{Section~\ref{#1}}
\DN\dref[1]{Definition~\ref{#1}}
\DN\rref[1]{Remark~\ref{#1}} 
\DN\corref[1]{Corollary~\ref{#1}}
\newcommand{\ep}{\varepsilon}
\DN{\limi}[1]{\lim_{#1\to\infty}} 	
\DN\PD[2]{\frac{\partial #1 }{\partial #2}}
\DN\map[3]{#1:#2 \to #3}
\DN\R{\mathbb{R}}\DN\Rd{\mathbb{R}^d}
\DN\N{\mathbb{N}}
\DN\Q{\mathbb{Q}}
\DN{\Z}{\mathbb{Z}}
\DN\C{\mathbb{C}}
\DN\F{\mathbb{F}}
\DN\HH{\mathbb{H}}
\DN\xx{\mathbf{x}}
\DN\XX{\mathbf{X}}
\DN\yy{\mathbf{y}}
\DN\zz{\mathbf{z}}
\DN\WN{W_{}^N}
\DN\WNp{W_{}^{N+1} }
\DN\intnWN{\mathring{W}_{\ge}^N}
\DN\intWNp{\mathring{W}_{}^{N+1}}
\DN\WNNp{W^{N,N+1}}
\DN\nWNN{W_{\ge}^{N,N}}
\DN\nWN{W_{\ge}^{N}}
\DN\nWNp{W_{\ge}^{N+1}}
\DN\LtNNp{\Lambda_{\theta, N}^{N+1}}
\DN\LtaNNp{\Lambda_{\theta, \alpha, N}^{N+1}}
\DN\LtaNN{\Lambda_{\theta, \alpha, N}^{N}}
\DN\diag{\mathrm{diag}}
\DN\rad{\mathfrak{rad}}
\DN\eval{\mathfrak{eval}}
\DN\sym{\mathrm{sym}}
\DN\Lag{\mathrm{Lag}}
\date{\today}
\begin{document}
\title[]{The intertwining property for $\beta $-Laguerre processes and integral operators for Jack polynomials }

\author{KAWAMOTO Yosuke}
\address{Graduate school of environmental, life, natural science and technology, Okayama University, Okayama, Japan}
\email{y-kawamoto@okayama-u.ac.jp}

\author{SHIBUKAWA Genki}
\address{Kitami Institute of Technology, Hokkaido, Japan}
\email{g-shibukawa@mail.kitami-it.ac.jp}

\subjclass{60B20, 60J60}
\keywords{Laguerre processes, random matrices, intertwining relations, Jack polynomials}

\maketitle

\begin{abstract}
The aim of this paper is to study intertwining relations for Laguerre process with inverse temperature $\beta \ge 1$ and parameter $\alpha >-1$.
We introduce a Markov kernel that depends on both $\beta $ and $ \alpha $, and establish new intertwining relations for the $\beta$-Laguerre processes using this kernel.
A key observation is that Jack symmetric polynomials are eigenfunctions of our Markov kernel, which allows us to apply a method established by Ramanan and Shkolnikov.
Additionally, as a by-product, we derive an integral formula for multivariate Laguerre polynomials and multivariate hypergeometric functions associated with Jack polynomials.
\end{abstract}

\tableofcontents

\section{Introduction}
\subsection{Markov kernels and Laguerre processes}

Our purpose is to find intertwining relations for Laguerre processes for general inverse temperature $\beta$.
Whilst intertwining relations for the $\beta $-Laguerre process were shown in \cite{Ass19}, we establish new relations by using different Markov kernels, which are a $\beta$-extension of the kernel used in \cite{BuK}.

Throughout this paper, we set $\theta = \beta / 2$.
Set the Weyl chamber $\WN =\{ \mathbf{x}_N =(x_1,\ldots,x_N) \in \R^N  \st x_1\le \ldots \le x_N  \}$.
We will write $\xx_N $ simply $\xx $ when no confusion can arise.
The internal of $\WN $ is denoted by $\mathring {W}^{N} := \{\xx\in \WN \st x_1 < \cdots < x_{N} \}$.
We write the set of interlaced configuration as 
\begin{align*}
  \WNNp  (\xx )& :=\{\yy \in \WN \st x_1 \le y_1\le x_2\le \ldots \le y_N \le x_{N+1} \} \quad\text{ for }\xx =(x_1,\ldots, x_{N+1}) \in \WNp 
.\end{align*}  
The following formula is useful to introduce Markov kernels.

\begin{lemma}\cite[Propositon 4.2.1]{For10} \label{l:12}
Let $( w_1,\ldots, w_{N+1} )$ be a random variable distributed according to the Dirichlet distribution with parameters $(s_1, \ldots, s_{N+1})$ and let $\xx \in \mathring{W}^{N+1}  $.
Then, the roots $\yy \in \WN $ of the random rational function 
% \begin{align*}
%   \frac{\Gamma(s_1+\cdots +s_{N+1})}{\Gamma(s_1) \cdots \Gamma(s_{N+1})} \prod_{i=1}^{N+1} w_i^{s_i-1}
% ,\end{align*}
% with $w_1,\ldots, w_{N+1}>0$ and $\sum_{i=1}^{N+1}w_i=1$.
\begin{align*}
 y \mapsto \sum_{i=1}^{N+1} \frac{w_i}{y-x_i} \bigg(= \frac{ \sum_{i=1}^{N+1}\big( w _i \prod_{1\le j \le N+1, j\neq i} (y -x_j) \big)}{\prod _{i=1}^{N+1} (y-x_i)} \bigg)
\end{align*}
has the probability density function
\begin{align*}
  \frac{\Gamma(s_1+\cdots +s_{N+1})}{\Gamma(s_1) \cdots \Gamma(s_{N+1})}  \frac{\prod_{1\le i<j\le N } (y_j-y_i)}{\prod_{1\le i <j\le N+1} (x_j-x_i )^{s_j+s_i-1}} \prod_{i=1}^{N+1} \prod_{j=1}^{N} |x_i-y_j|^{s_i -1} \mathbf{1}_{\WNNp (\xx)}(\yy) 
.\end{align*}
\end{lemma}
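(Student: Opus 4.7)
The plan is to push the Dirichlet density forward along the change of variables $(w_1,\ldots,w_N)\mapsto(y_1,\ldots,y_N)$ (with $w_{N+1}=1-\sum_{i=1}^N w_i$). Clearing denominators in the equation $f(y)=0$, the numerator
$P(y)=\sum_{i=1}^{N+1}w_i\prod_{j\neq i}(y-x_j)$
is a monic polynomial of degree $N$ (its leading coefficient is $\sum_i w_i=1$) and hence factors as $\prod_{l=1}^N(y-y_l)$. Evaluating this polynomial identity at $y=x_k$ kills every term on the left except $i=k$ and yields the explicit inverse
\begin{equation*}
w_k=\frac{\prod_{l=1}^N(x_k-y_l)}{\prod_{j\neq k}(x_k-x_j)},\qquad k=1,\ldots,N+1.
\end{equation*}
A sign check using the strict orderings of $\xx$ and $\yy$ shows that $w_k>0$ for every $k$ is equivalent to the interlacing $\yy\in\WNNp(\xx)$, so the map is a bijection of these open domains and automatically supplies the indicator $\mathbf{1}_{\WNNp(\xx)}$ in the pushforward.

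Next I would compute the Jacobian $J:=\det(\partial w_k/\partial y_l)_{k,l=1}^{N}$. Logarithmic differentiation of the formula above gives $\partial w_k/\partial y_l=-w_k/(x_k-y_l)$, so after factoring the diagonal $\prod_{k=1}^N w_k$ out I am left with the Cauchy matrix $(1/(x_k-y_l))_{k,l=1}^{N}$. Applying Cauchy's determinant identity and substituting the explicit formula for $w_k$ causes the products $\prod_{k,l=1}^N|x_k-y_l|$ to cancel. Collecting the remaining $x$-differences, each unordered pair $\{i,j\}\subset\{1,\ldots,N\}$ is counted twice while each pair $\{k,N+1\}$ is counted once, and I expect the Jacobian to simplify to
\begin{equation*}
|J|=\frac{\prod_{1\le i<j\le N}(y_j-y_i)}{\prod_{1\le i<j\le N+1}(x_j-x_i)}.
\end{equation*}

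Finally, I would pull back the Dirichlet density $\prod_{i=1}^{N+1}w_i^{s_i-1}$ through the substitution: the numerator contributes $\prod_{i,j}|x_i-y_j|^{s_i-1}$ directly, while in the denominator each pair $i<j$ contributes $(x_j-x_i)^{s_i-1}$ from the $i$-th factor and $(x_j-x_i)^{s_j-1}$ from the $j$-th factor, combining to $(x_j-x_i)^{s_i+s_j-2}$. Multiplying by $|J|$ upgrades this exponent by one to $s_i+s_j-1$, which, together with the normalising Gamma ratio, reproduces exactly the claimed density.

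The only nontrivial step is the Jacobian simplification: one has to reconcile Cauchy's identity, the substituted product $\prod_k w_k$, the various sign conventions, and the two kinds of $x$-differences (those among $x_1,\ldots,x_N$ and those involving $x_{N+1}$) so that everything collapses to the displayed ratio of Vandermonde products. The remaining substitution is pure bookkeeping, and the interlacing indicator comes for free from the bijection in the first step.
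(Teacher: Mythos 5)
The paper does not prove this lemma; it is quoted verbatim from Forrester's book (Proposition 4.2.1), which is itself the Dixon--Anderson change-of-variables computation. Your proof is a correct reconstruction of exactly that standard argument: the explicit inverse $w_k=\prod_{l}(x_k-y_l)/\prod_{j\neq k}(x_k-x_j)$ obtained by evaluating the monic degree-$N$ numerator $\prod_l(y-y_l)$ at $y=x_k$, the sign/interlacing equivalence (for the converse direction, that all $w_k>0$ forces interlacing, one also uses that $f$ is strictly decreasing between consecutive poles, but this is routine), the Jacobian via $\partial w_k/\partial y_l=-w_k/(x_k-y_l)$ and the Cauchy determinant giving
\[
|J|=\Bigl(\prod_{k=1}^N w_k\Bigr)\frac{\prod_{1\le i<j\le N}|x_j-x_i|\,|y_j-y_i|}{\prod_{k,l=1}^N|x_k-y_l|}=\frac{\prod_{1\le i<j\le N}(y_j-y_i)}{\prod_{1\le i<j\le N+1}(x_j-x_i)},
\]
and the exponent bookkeeping $(s_i+s_j-2)+1=s_i+s_j-1$. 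One small wording slip: $\prod_k w_k$ is factored from the rows of the matrix, not from the diagonal, but that does not affect anything. The computation checks out and reproduces the stated density.
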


Following \cite{AsN21}, we introduce the Markov kernel $\LtNNp $ via a random polynomial.
\begin{definition}\label{d:12}
Let $\theta>0$ and  $\xx=(x_1,\ldots, x_{N+1}) \in \WNp $, let $\LtNNp (\xx, \cdot )$ be the distribution on $\WN $ of the roots of the random polynomial 
  \begin{align*}
  y\mapsto  \sum_{i=1}^{N+1} \Big( w _i \prod_{1\le j \le N+1, j\neq i} (y -x_j) \Big)
  ,\end{align*}
where $(w_1,\ldots, w _{N+1}) $ is distributed according to the Dirichlet distribution with all parameters being equal to $\theta$.
\end{definition}

The kernel $\LtNNp $ is an integral operator on $\WN $ depending on a parameter $\xx \in \WNp$.
In this sense, we write $\LtNNp : \WNp \dashrightarrow \WN $.
We can describe the probability density of the kernel $\LtNNp $ due to \lref{l:12}.
\begin{proposition}\label{p:13}
For any $\xx \in \mathring{W}^{N+1} $, we have
\begin{align}\label{:11a}
  \LtNNp (\xx, d\yy ) &= \frac{1}{Z_{\theta }^{N}} \frac{\prod_{1\le i<j\le N } (y_j-y_i) }{\prod_{1\le i<j\le {N+1}} (x_j-x_i)^{2\theta-1}} \prod_{i=1}^{N+1} \prod_{j=1}^{N}|x_i-y_j|^{\theta -1} \mathbf{1}_{\WNNp (\xx)}(\yy) d\yy 
,\end{align}
where the normalisation constant is given by 
\begin{align*}
  \frac{1}{Z_{\theta }^{N} } :=\frac{\Gamma((N+1)\theta)}{\Gamma(\theta)^{N+1}}
.\end{align*}
% \begin{proof}
%   This result immediately comes from \lref{l:12}.
% \end{proof}
\end{proposition}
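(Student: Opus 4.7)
The plan is to reduce the proposition to a direct application of \lref{l:12} by observing that the random polynomial appearing in \dref{d:12} has the same roots as the random rational function in \lref{l:12}.

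First I would note that for $\xx \in \mathring{W}^{N+1}$ the denominator $\prod_{i=1}^{N+1}(y-x_i)$ is a nonzero polynomial whose zeros lie outside the support of $\yy \in \WN$ in question (more precisely, the roots $y_j$ of the numerator interlace the $x_i$, hence are almost surely distinct from them). Therefore the roots of the rational function
\[
y \mapsto \sum_{i=1}^{N+1} \frac{w_i}{y-x_i}
= \frac{\sum_{i=1}^{N+1} w_i \prod_{1\le j\le N+1,\, j\neq i}(y-x_j)}{\prod_{i=1}^{N+1}(y-x_i)}
\]
coincide with the roots of the numerator, which is exactly the random polynomial used in \dref{d:12} to define $\LtNNp(\xx,\cdot)$. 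Consequently $\LtNNp(\xx,\cdot)$ is nothing other than the distribution described by \lref{l:12}, evaluated at the specific choice of Dirichlet parameters.

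Next I would specialise \lref{l:12} to $s_1=\cdots=s_{N+1}=\theta$. Substituting these values into the density yields
\[
\Gamma(s_1+\cdots+s_{N+1}) = \Gamma((N+1)\theta), \qquad \Gamma(s_1)\cdots\Gamma(s_{N+1}) = \Gamma(\theta)^{N+1},
\]
so the prefactor becomes $1/Z_\theta^N$. The exponents transform as $s_j+s_i-1 = 2\theta-1$ in the factor $(x_j-x_i)^{s_j+s_i-1}$, and $s_i-1 = \theta-1$ in the factor $|x_i-y_j|^{s_i-1}$. Assembling these pieces and keeping the Vandermonde $\prod_{i<j}(y_j-y_i)$ and the indicator $\mathbf{1}_{\WNNp(\xx)}(\yy)$ unchanged reproduces exactly the formula \eqref{:11a}.

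Since every step is a direct translation, there is no genuine obstacle; the only thing worth being careful about is the identification of the roots of the rational function with those of the polynomial, which needs the assumption $\xx \in \mathring{W}^{N+1}$ so that the $x_i$ are pairwise distinct and the denominator is well defined.
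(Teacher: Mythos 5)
Your proof is correct and follows exactly the route the paper intends: the paper simply cites Lemma~\ref{l:12} as the source of the density, and you correctly observe that the random polynomial in Definition~\ref{d:12} is the numerator of the rational function in Lemma~\ref{l:12} (hence has the same roots when $\xx\in\mathring{W}^{N+1}$), then specialise $s_1=\cdots=s_{N+1}=\theta$ to obtain the stated formula.
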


The distribution \eqref{:11a} is referred to as the Dixon-Anderson conditional distribution because this probability density was found in the work by Dixon \cite{Dix05}, and considered by Anderson in the context of the Selberg integral \cite{And91}. 
The formulation of Dixon--Anderson distribution by \dref{d:12} is used in \cite{AsN21}. 

We discuss intertwining relations between Markov kernels and the semigroups of stochastic processes related to random matrices.
Several diffusions intertwined by $\LtNNp $ have been identified.
The first example intertwined by this kernel is the non-intersecting Brownian motions, also known as the Dyson Brownian motions for $\beta=2$ \cite{War07}.
Other examples for $\beta=2$ were studied in \cite{Ass20, AOW19}.
For general $\beta$, the intertwining relations for the $\beta $-Dyson Brownian motions were established \cite{GoS15,RaS18}, and those for the $\beta$-Laguerre and $\beta$-Jacobi processes were obtained \cite{Ass19}.

Here, we focus on intertwining relations for the $\beta$-Laguerre processes.
Set $\nWN = \WN \cap [0,\infty )^N$.
The $N$-dimensional Laguerre process for the inverse temperature $\beta \ge 1 $ is the process on $\nWN $ defined by the stochastic differential equation  
\begin{align}\label{:11d}
  dX_t^{N,i} &=\sqrt{2 X_t^{N, i} } dB_t^i +\frac{\beta }{2 }\Big( \alpha +1 +\sum_{1\le j\le N, j\neq i}\frac{2 X_t^{N, i} }{X_t^{N, i} -X_t^{N,j} } \Big)dt, \qquad 1\le i\le N 
  ,\end{align}
where $(B^i)_{i=1}^N $ is the $N$-dimensional Brownian motion and $\alpha > -1$.
For any starting point $\xx_N \in\nWN$, the equation \eqref{:11d} has a unique strong solution with no collisions and no explosions \cite[Corollary 6.5]{GrM14}.
The Laguerre processes for general $\beta$ were introduced and studied in \cite{Dem07b}.
When $\beta=1$, the Laguerre process is the squared singular values process of a matrix with Brownian entries \cite{Bru91}.
The Laguerre process for $\beta=2 $ is identical to the non-colliding square Bessel process, which describes the squared singular values process of a matrix with complex Brownian entries \cite{Dem07, KoO01}.

Set $\theta = \beta /2$ as before.
Let $\{ T_{\theta, \alpha ,t}^{ N} \}_{t\ge 0 }$ be the Markov semigroup associated with the solution to \eqref{:11d}.
The following intertwining relation for Laguerre processes was established in \cite{Ass19} (see also \cite[Section 3.7]{AOW19} for $\theta=1$). 
\begin{proposition}[\cite{Ass19}]\label{p:14}
Assume that $\theta \ge 1/2$ and $\alpha >-1$.  
Then, for any $N\in\N$ and $t\ge 0$, we have the equality of Markov kernels
\begin{align}\label{:11f}
T_{\theta, \alpha ,t}^{N+1 } \LtNNp  =\LtNNp T_{\theta, \alpha +1 ,t}^{N } 
.\end{align}
\end{proposition}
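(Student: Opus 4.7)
The plan is to reduce \pref{p:14} to an infinitesimal-generator version of the intertwining and then lift the resulting identity to the full semigroups via a standard uniqueness argument. Write $L^{N}_{\theta,\alpha}$ for the generator of $\{T^{N}_{\theta,\alpha,t}\}_{t\ge 0}$; from \eqref{:11d} one reads off
\begin{align*}
L^{N}_{\theta,\alpha} = \sum_{i=1}^{N} x_i\, \partial_{x_i}^{2} + \theta \sum_{i=1}^{N} \Big(\alpha+1+\sum_{j\neq i}\frac{2x_i}{x_i-x_j}\Big) \partial_{x_i}.
\end{align*}
The target is then the generator intertwining
\begin{align*}
L^{N+1}_{\theta,\alpha}\bigl(\LtNNp f\bigr)(\xx) = \LtNNp\bigl(L^{N}_{\theta,\alpha+1} f\bigr)(\xx)
\end{align*}
for every $f$ in a dense subalgebra $\mathcal{A}$ of smooth symmetric functions in $y_1,\ldots,y_N$.

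For the choice of $\mathcal{A}$ and the verification of the generator identity, the observation advertised in the abstract---that the Jack polynomials $J^{(\theta)}_\lambda$ are eigenfunctions of $\LtNNp$---is decisive. I would take $\mathcal{A}$ to be the algebra of symmetric polynomials in $y_1,\ldots,y_N$ and exploit its two natural bases: the Jack basis $\{J^{(\theta)}_\lambda\}_\lambda$ and the multivariate Laguerre basis $\{L^{(\theta,\alpha+1)}_\lambda\}_\lambda$ that diagonalises $L^{N}_{\theta,\alpha+1}$. Because these two bases are related by a unitriangular transformation with respect to the dominance order on partitions, and because $\LtNNp J^{(\theta)}_\lambda(\yy) = c^{N}_{\theta,\lambda}\,J^{(\theta)}_\lambda(\xx)$ for an explicit scalar $c^{N}_{\theta,\lambda}$ arising from the Dixon--Anderson integral together with Stanley's branching rule, the generator identity collapses to a family of purely algebraic identities among Jack polynomials in $\xx\in\WNp$, indexed by the partition $\lambda$.

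The main obstacle is computing $c^{N}_{\theta,\lambda}$ in closed form and matching, for each $\lambda$, the action of $L^{N+1}_{\theta,\alpha}$ on the resulting Jack expansion with the $L^{N}_{\theta,\alpha+1}$-eigenvalue of $L^{(\theta,\alpha+1)}_\lambda$. This is precisely where the shift $\alpha\mapsto\alpha+1$ must materialise, and where the Pieri/branching rules for Jack polynomials are unavoidable; the integral formula for multivariate Laguerre polynomials announced in the abstract should emerge as a by-product of this same computation.

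The passage from the generator identity to \eqref{:11f} is then standard. For every polynomial $f\in\mathcal{A}$, both $u(t,\xx) := T^{N+1}_{\theta,\alpha,t}(\LtNNp f)(\xx)$ and $v(t,\xx) := \LtNNp(T^{N}_{\theta,\alpha+1,t} f)(\xx)$ are polynomially bounded smooth solutions of the Cauchy problem $\partial_t w = L^{N+1}_{\theta,\alpha} w$ on $\mathring{W}^{N+1}_{\ge}$ with common initial datum $\LtNNp f$; uniqueness within this class---available because $\theta\ge 1/2$ and $\alpha>-1$ guarantee no collisions and no explosions for the $(N+1)$-dimensional $\beta$-Laguerre process---forces $u\equiv v$, and moment-determinacy of the Laguerre distributions then extends \eqref{:11f} from $\mathcal{A}$ to all bounded measurable $f$.
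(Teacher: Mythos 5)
Your plan follows the same Ramanan--Shkolnikov/Assiotis strategy that the paper itself adopts in Section~5 for \tref{t:17}: reduce the intertwining to the generator level, verify it on the algebra of symmetric polynomials via the Jack basis and the Pieri-type relations \eqref{:40a}--\eqref{:40b}, lift to semigroups by solving a finite system of linear ODEs, and extend to general test functions by moment determinacy. That architecture is correct, and the relation you record, $[\LtNNp P_{\lambda}^{\theta}](\xx_{N+1})=c(\lambda,N,\theta)P_{\lambda}^{\theta}(\xx_{N+1})$, is indeed the workhorse (this is \lref{l:21}). Two cautions on terminology: the paper is careful to point out that \lref{l:21} does \emph{not} make Jack polynomials eigenfunctions of $\LtNNp$ in the strict sense (the two sides live in $N$ and $N+1$ variables respectively; see the discussion after \lref{l:21}); and the eigenvalue $c(\lambda,N,\theta)$ comes from \cite{Oko98,OkO97}, not Stanley's branching rule.

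There is one genuine error in the proposal. You claim that the multivariate Laguerre polynomials $L_{\lambda}^{a}(\cdot;\theta^{-1})$ diagonalise the Laguerre process generator $L^{N}_{\theta,\alpha+1}=D_1^N+\theta(\alpha+2)E_0^N$ and propose to match eigenvalues. In fact, $L_{\lambda}^{a}$ is an eigenfunction of the \emph{Ornstein--Uhlenbeck} operator $\tilde H^{(L)}=D_1^N+(a+1)E_0^N-E_1^N$ (see \eqref{:A2}); the Laguerre process generator $A_{\theta,\alpha}^{N}=D_1^N+\theta(\alpha+1)E_0^N$ is strictly degree-lowering on polynomials, so it is nilpotent on each truncated polynomial space and has no non-constant polynomial eigenfunctions. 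This is precisely the distinction the paper draws between \sref{s:5} (Laguerre process: matrix exponential argument via Jack polynomials and \lref{l:51}) and Appendix~\ref{appendix:B} (OU version: direct diagonalisation by $L_{\lambda}^a$). Fortunately the error is not fatal to your plan, because the generator identity can be verified directly in the Jack basis using \eqref{:40a}--\eqref{:40b}. Relatedly, your Cauchy-uniqueness step appeals to ``no collisions, no explosions,'' which is not what actually delivers uniqueness here; the honest justification (and what the paper does) is that for $f=P_{\lambda}^{\theta}$ the whole evolution is confined to the finite-dimensional span of $\{P_{\kappa}^{\theta}\}_{\kappa\subseteq\lambda}$, turning the PDE into a linear ODE system with a unique matrix-exponential solution.
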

Remark that, in \eqref{:11a}, the $\alpha$-parameters of the Laguerre processes on the left- and right-hand sides are different.
For this reason, we refer to the equation \eqref{:11f} as the shifted intertwining relation.

In this paper, we establish intertwining relations with fixed parameters, analogous to those for the Ornstein--Uhrenbeck counterpart of the Laguerre process for $\theta =1$ \cite{BuK}.
Specifically, we will identify a kernel $\LtaNNp $, depending on $\theta $ and $\alpha $, such that 
\begin{align}\label{:11g}
  T_{\theta, \alpha ,t}^{N+1 }   \LtaNNp  =\LtaNNp T_{\theta, \alpha  ,t}^{ N}
.\end{align}
Unlike in \eqref{:11f}, the  $\alpha$-parameters on both sides of \eqref{:11g} are the same.

\subsection{Main results}
To obtain new intertwining relations for the Laguerre processes, we introduce a kernel $\LtaNN : \nWN  \dashrightarrow \nWN $ depending on $\theta $ and $\alpha $.

\begin{definition}\label{d:13}
Suppose $\theta>0$ and $\alpha >-1$.
For $\xx=(x_1,\ldots, x_{N}) \in \nWN $, let $\LtaNN (\xx, \cdot )$ be the distribution on $\nWN $ of the root of the random polynomial
\begin{align*}
  %R(y)= \frac{w_{0} }{y} + \sum_{i=1}^{N} \frac{ w_i }{y -x_i} 
y \mapsto  \sum_{i=0}^{N} \Big( w _i \prod_{0 \le j \le N, j\neq i} (y -x_j) \Big)
,\end{align*}
where $(w_0, w_1,\ldots, w _{N}) $ is distributed according to the Dirichlet distribution with parameter $(s_0, s_1, \ldots, s_N)$ given by $s_0=(\alpha+1) \theta$ and $s_1=\cdots =s_N=\theta $.
Here, we use the symbol $x_0=0$ for notational convenience. 
% \begin{align*}
%   \frac{\Gamma ((N+\alpha+ 1)\theta )}{\Gamma(\theta)^{N} \Gamma((\theta (\alpha+1))) } w_0^{\theta (\alpha+1)-1}  \prod _{i=1}^{N} w_i^{\theta-1} 
% \end{align*}
% with $w_0, w_1,\ldots, w_N \ge 0$ and $\sum_{i=1}^{N} w_i=1$.
 \end{definition}
  
The probability density of the kernel $\LtaNN $ is also explicitly given from \lref{l:12}.
We set $\mathring {W}_{\ge}^{N} = \{\xx\in \nWN \st x_1 < \cdots < x_{N} \}$ and  
\begin{align*}
\nWNN (\xx )& :=\{\yy \in \nWN \st 0\le  y_1 \le x_1\le y_2\le \ldots \le y_N \le x_{N} \} \quad\text{ for }\xx =(x_1,\ldots, x_{N}) \in \nWN
.\end{align*}
\begin{proposition}\label{p:15}
Suppose $\theta \ge 1/2 $ and $\alpha >-1$.
For any $\xx \in \intnWN $, we have  
\begin{align}\label{:11b}
\LtaNN (\xx, d\yy) &=\frac{1}{Z_{\theta,\alpha }^{N}}  \frac{\prod_{1\le i<j\le N } (y_j-y_i) }{\prod_{1\le i<j\le N } (x_j-x_i) ^{2\theta-1}} \prod_{i,j=1}^{N}|x_i-y_j|^{\theta -1} \prod_{i=1}^{N} \frac{ y_i^{\theta(\alpha+1)-1 }}{ x_i^{\theta(\alpha+2) -1}  }  \mathbf{1}_{\nWNN (\xx)}(\yy) d\yy
%\\& =\frac{\Gamma(\theta(N+\alpha+1))}{\Gamma(\theta )^N \Gamma (\theta(\alpha+1))} \Big( \frac{\Delta_N (\yy)}{\Delta_N (\xx )} \Big)^{\theta} \Big(\det \Big[\frac{1}{x_i-y_j}\Big] \Big)^{1-\theta} \prod_{i=1}^{N} \frac{ y_j^{\theta(\alpha+1)-1 }}{ x_j^{\theta(\alpha+2) -1}  }  \mathbf{1}_{N,\xx }(\yy)d\yy
,\end{align}
where the normalisation constant is given by 
\begin{align*}
  \frac{1}{Z_{\theta,\alpha }^{N}} :=\frac{\Gamma((N+\alpha+1)\theta )}{\Gamma(\theta )^N \Gamma ((\alpha+1) \theta )} 
 .\end{align*} 
%  \begin{proof}
%   Applying \lref{l:12} for $s_1=\cdots=s_N =\theta, s_{N+1}=\theta(\alpha+1)$ and $x_{N+1}=0$, we obtain this proposition.
%  \end{proof}
\end{proposition}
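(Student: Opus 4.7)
The plan is to deduce \pref{p:15} directly from \lref{l:12}, applied to the augmented tuple $(x_0, x_1, \ldots, x_N) = (0, x_1, \ldots, x_N) \in \mathring W^{N+1}$ with Dirichlet parameters $(s_0, s_1, \ldots, s_N) = ((\alpha+1)\theta, \theta, \ldots, \theta)$. This reduction is available because the polynomial $y \mapsto \sum_{i=0}^N w_i \prod_{j \ne i}(y - x_j)$ of \dref{d:13} equals $\prod_{i=0}^N (y - x_i)$ times the rational function in \lref{l:12}, so they share the same $N$ roots. The hypothesis $\xx \in \intnWN$ ensures that the augmented tuple genuinely lies in $\mathring W^{N+1}$, so \lref{l:12} indeed applies.

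Once this identification is in place, the argument reduces to bookkeeping. The sum $s_0 + \cdots + s_N = (N + \alpha + 1)\theta$ yields the claimed normalising constant $1/Z^N_{\theta,\alpha}$. To reach the precise form \eqref{:11b}, I would split each product in the density supplied by \lref{l:12} between the index $i = 0$ (where $x_0 = 0$ and $s_0 = (\alpha+1)\theta$) and the indices $i \in \{1, \ldots, N\}$ (where $s_i = \theta$). The $i = 0$ contributions to the denominator $\prod_{0 \le i < j \le N}(x_j - x_i)^{s_i+s_j-1}$ produce the factor $\prod_{j=1}^N x_j^{(\alpha+2)\theta - 1}$, while those in the numerator $\prod_{i=0}^N\prod_{j=1}^N|x_i - y_j|^{s_i - 1}$ produce $\prod_{j=1}^N y_j^{(\alpha+1)\theta - 1}$. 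The remaining $1 \le i \le N$ contributions reconstruct $\prod_{1 \le i < j \le N}(x_j-x_i)^{2\theta-1}$ and $\prod_{i,j=1}^N |x_i - y_j|^{\theta-1}$. The interlacing support $\{x_0 \le y_1 \le x_1 \le \cdots \le y_N \le x_N\}$ collapses under $x_0 = 0$ to $\nWNN(\xx)$, which in particular forces $y_1 \ge 0$ and confirms that the distribution is supported on $\nWN$.

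The proof carries no substantive obstacle: it is essentially a relabelling of \lref{l:12} with one $x$-coordinate fixed at $0$ and one Dirichlet weight singled out. The only step requiring care is the clean splitting of the pairwise products between the $0$-th and positive indices, which is purely algebraic. I note in passing that the assumption $\theta \ge 1/2$ is not used anywhere in this derivation — the parameters $s_0, \ldots, s_N$ are positive whenever $\theta > 0$ and $\alpha > -1$ — and presumably appears here only for consistency with the intertwining relation \eqref{:11g} to be established later.
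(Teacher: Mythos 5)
Your proposal is correct and is exactly the argument the paper intends (the paper gives no separate proof of Proposition~\ref{p:15}, remarking only that it follows from Lemma~\ref{l:12}): you apply Lemma~\ref{l:12} to the augmented tuple $(0,x_1,\dots,x_N)$ with Dirichlet parameters $((\alpha+1)\theta,\theta,\dots,\theta)$, and the bookkeeping you describe — separating the $i=0$ index from the rest in the two pairwise products, and noting that the interlacing condition with $x_0=0$ becomes $\nWNN(\xx)$ — is exactly what reduces the Dixon--Anderson density to \eqref{:11b}. Your side remark is also accurate: the derivation uses only $\theta>0$ and $\alpha>-1$ (so that all Dirichlet parameters are positive), and the hypothesis $\theta\ge 1/2$ plays no role in this proposition; the only small caveat is that $\xx\in\intnWN$ permits $x_1=0$, in which case the augmented tuple has a repeated coordinate and Lemma~\ref{l:12} needs $x_1>0$, but this is a set of measure zero and does not affect the identification of the density.
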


When $\alpha \in \{0\} \cup \N $, the probability density \eqref{:11b} for $\theta=1/2, 1, 2$ was established as the distribution of the squared singular values of a fixed matrix multiplied with a truncated orthogonal, unitary, symplectic random matrix, respectively \cite{AhS22, KKS16}.
This fact will be used in \sref{s:6}.

For $\theta>0 $ and $\alpha >-1$, we define the Markov kernel $\LtaNNp : \nWNp \dashrightarrow \nWN $ as  
\begin{align}\label{:11c}
\LtaNNp  =\LtNNp \LtaNN
.\end{align}
The kernel $\LtaNNp $ gives an intertwining relation for $\beta$-Laguerre processes with a fixed parameter $\alpha$.

\begin{theorem}\label{t:16}
Suppose $\theta \ge 1/2 $ and $\alpha >-1$.
Then, for any $ N\in\N$ and $ t\ge0 $, we have the equality of Markov kernels 
\begin{align} \label{:16a}
T_{\theta, \alpha, t}^{N+1} \LtaNNp = \LtaNNp  T_{\theta, \alpha, t}^{N} 
.\end{align}  
\end{theorem}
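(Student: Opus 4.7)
The strategy is to follow the Ramanan--Shkolnikov eigenfunction method: verify \eqref{:16a} by matching the action of both sides on a dense spanning set of symmetric polynomials, exploiting that the Jack polynomials simultaneously diagonalise the Markov kernels and (in Laguerre combinations) the semigroup generators. A useful first reduction is to substitute the definition \eqref{:11c} and apply \pref{p:14} to the outer kernel $\LtNNp$, giving
\[
T^{N+1}_{\theta,\alpha,t}\LtaNNp \;=\; \LtNNp\, T^{N}_{\theta,\alpha+1,t}\,\LtaNN,
\]
so that \eqref{:16a} follows from the equi-dimensional intertwining
\begin{equation}\label{:proposal-red}
T^{N}_{\theta,\alpha+1,t}\,\LtaNN \;=\; \LtaNN\, T^{N}_{\theta,\alpha,t}
\end{equation}
after cancelling $\LtNNp$ on the left (which acts triangularly and invertibly on each graded piece of the symmetric polynomial algebra).

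\ms

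The eigenfunction structure I would exploit is the following. The $\beta$-Laguerre generator $\mathcal{L}^{N}_{\theta,\alpha}$ associated to \eqref{:11d} is a Laguerre-type Sekiguchi--Debiard operator, so the multivariate Laguerre polynomials $L_\lambda^{(\alpha)}(\cdot;\theta)$ indexed by partitions $\lambda$ with $\ell(\lambda)\le N$ satisfy $\mathcal{L}^{N}_{\theta,\alpha}L_\lambda^{(\alpha)}=-|\lambda|L_\lambda^{(\alpha)}$. The central algebraic input, advertised in the abstract, is that the Jack polynomials $P_\lambda$ diagonalise $\LtaNN$ with explicit Gamma-quotient eigenvalues $c_\lambda(\theta,\alpha,N)$. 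Given these, \eqref{:proposal-red} applied to $L_\lambda^{(\alpha)}$ collapses into a triangular identity in the Jack basis: writing $L_\lambda^{(\alpha)}=\sum_{\mu\le\lambda}a_{\lambda\mu}^{(\alpha)}P_\mu$ with top coefficient $a_{\lambda\lambda}^{(\alpha)}=1$, both sides produce $e^{-t|\lambda|}\sum_\mu a_{\lambda\mu}^{(\alpha)}c_\mu(\theta,\alpha,N)P_\mu$ provided the transition coefficients satisfy the compatibility $a_{\lambda\mu}^{(\alpha)}c_\mu(\theta,\alpha,N)=c_\mu(\theta,\alpha+1,N)\,a_{\lambda\mu}^{(\alpha+1)}$, which is a Gamma-factor identity at the level of the Laguerre-to-Jack transition matrix and should follow from the recursion between $L^{(\alpha)}_\lambda$ and $L^{(\alpha+1)}_\lambda$.

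\ms

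The main obstacle will be establishing the eigenfunction identity $\LtaNN P_\lambda=c_\lambda(\theta,\alpha,N)P_\lambda$. The density \eqref{:11b} is a Dixon--Anderson-type measure modified by the boundary factor $\prod_i y_i^{\theta(\alpha+1)-1}/\prod_i x_i^{\theta(\alpha+2)-1}$ coming from the artificial root $x_0=0$ with Dirichlet weight $(\alpha+1)\theta$ in \dref{d:13}; the required symmetric integral of $P_\lambda$ against this measure is not in the standard literature and must be derived, presumably by combining Kadell's Jack-polynomial Selberg integral with a Pieri-type recursion on $\lambda$. This is precisely the integral formula for Jack polynomials advertised in the abstract as a by-product, and once it is in hand the analogous diagonalisation for $\LtNNp$ is either a parallel computation or a limit of the $\LtaNN$ statement. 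Finally, the extension from the polynomial identity to the full Markov semigroup uses that the linear span of $\{L_\lambda^{(\alpha)}\}$ is a core for $\mathcal{L}^{N}_{\theta,\alpha}$ in the $L^2$-space of the invariant measure of \eqref{:11d}, a standard fact for $\beta$-ensembles.
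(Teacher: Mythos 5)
Your opening reduction is exactly the paper's strategy: substitute $\LtaNNp = \LtNNp\LtaNN$, apply the known shifted intertwining \eqref{:11f} to the outer kernel, and observe that everything reduces to the equi-dimensional intertwining \eqref{:proposal-red}, which is precisely the paper's Theorem~\ref{t:17}. However, the aside about ``cancelling $\LtNNp$'' is both unnecessary and logically misdirected: once \eqref{:proposal-red} is granted, \eqref{:16a} follows by composing $T^{N+1}_{\theta,\alpha,t}\LtaNNp = \LtNNp T^N_{\theta,\alpha+1,t}\LtaNN = \LtNNp\LtaNN T^N_{\theta,\alpha,t} = \LtaNNp T^N_{\theta,\alpha,t}$; no cancellation of the dimension-lowering operator $\LtNNp$ is needed, and in fact $\LtNNp$ is not a square operator on a fixed graded piece (it sends $N$-variable symmetric polynomials to $(N+1)$-variable functions), so the invertibility appeal would not hold as stated.

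The more serious problem is in your proposed mechanism for proving \eqref{:proposal-red}. You assert that the multivariate Laguerre polynomials are eigenfunctions of the Laguerre generator $A_{\theta,\alpha}^N = D_1^N + \theta(\alpha+1)E_0^N$ with eigenvalue $-|\lambda|$, and then both sides of \eqref{:proposal-red} would pick up $e^{-t|\lambda|}$. This is false. The operator $\tilde H^{(L)} = D_1^N + (a+1)E_0^N - E_1^N$ of which $L_\lambda^a$ is an eigenfunction (cf.\ \eqref{:A2}) differs from $A_{\theta,\alpha}^N$ by the crucial $-E_1^N$ term; it is the generator of the \emph{Ornstein--Uhlenbeck} Laguerre process $\mathfrak{A}_{\theta,\alpha}^N$ of Appendix~\ref{appendix:B}, not of the process in \eqref{:11d}. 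The Laguerre process of \eqref{:11d} is not stationary and its semigroup does not act by exponential contraction on any polynomial eigenbasis. Your proposed eigenfunction-matching calculation ``both sides produce $e^{-t|\lambda|}\sum_\mu a_{\lambda\mu}^{(\alpha)} c_\mu P_\mu$'' would therefore fail; the same confusion infects your closing claim about $L^2$-cores in the invariant measure (the Laguerre process has no finite invariant measure). The paper instead handles \tref{t:17} by noting that $A_{\theta,\alpha}^N$ acts \emph{triangularly} (degree non-increasing) on the finite-dimensional span $L(\{P_\kappa^\theta\}_{\kappa\subseteq\lambda})$, writes the ODE satisfied by $f_\kappa(t)=T^N_{\theta,\alpha,t}P_\kappa^\theta(\xx)$, solves it via a matrix exponential $e^{tM_2}$, and then passes from the polynomial identity to equality of measures via a moment determinacy argument (de Jeu's theorem with an exponential-moment bound). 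Your suggestion that the difficult input is the eigenfunction identity $\LtaNN P_\lambda^\theta = c(\lambda,N,\theta;\alpha)P_\lambda^\theta$ is correct (this is the paper's Theorem~\ref{t:22}), but your strategy for leveraging it at the semigroup level needs to be replaced by the triangular ODE/matrix-exponential argument, or equivalently applied to the OU-Laguerre process where your eigenfunction picture is literally valid.
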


% The sequence of stochastic process $\{P_{\theta, \alpha, t}^{N} \}_{N\in\N}$ is coherent with projective system $\{\LtaNNp , \nWN \}_{N\in\N}$.

%\subsection{Shifted intertwining relations}

\tref{t:16} is demonstrated by employing new shifted intertwining relations. 
%similar to the proof given in \cite{BuK}.

\begin{theorem}\label{t:17}
Suppose $\theta \ge 1/2 $ and $\alpha >-1$.
Then, for any $ N\in\N$ and $ t\ge 0$,  we have the shifted intertwining relation
\begin{align}\label{:17b}
  T_{\theta, \alpha+1, t}^{N} \LtaNN &= \LtaNN  T_{\theta, \alpha, t}^{N} 
  .\end{align}  
\end{theorem}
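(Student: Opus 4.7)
The plan is to prove \eqref{:17b} by diagonalising both semigroups and the kernel $\LtaNN$ using Jack polynomials, following the strategy developed by Ramanan and Shkolnikov. The idea is that the spectral data of $\LtaNN$ and $T_{\theta,\alpha,t}^{N}$ are compatible in a way that forces the intertwining once the action of $\LtaNN$ on eigenfunctions of the generator is computed.

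First I would verify that $\LtaNN$ admits the Jack symmetric polynomials $P_\lambda^{(\theta)}$ as eigenfunctions. Substituting the density \eqref{:11b}, the claim $\LtaNN P_\lambda^{(\theta)}(\xx)=c_\lambda^{(\alpha)}P_\lambda^{(\theta)}(\xx)$ reduces to a generalised Dixon--Anderson/Selberg integral of Jack polynomials over the cell $\nWNN(\xx)$ against the weight $\prod y_i^{\theta(\alpha+1)-1}\prod_{i,j}|x_i-y_j|^{\theta-1}\prod_{i<j}(y_j-y_i)$; this integral can be evaluated by Kadell-type formulas for Jack polynomials combined with the Dirichlet normalisation in \dref{d:13}. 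The resulting identity is precisely the integral formula for multivariate Laguerre polynomials and Jack polynomials advertised as a by-product in the abstract.

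Second, I would use the fact that the multivariate Laguerre polynomial $L_\lambda^{(\alpha,\theta)}$ diagonalises the generator of the $\beta$-Laguerre process \eqref{:11d} with eigenvalue $-|\lambda|$ independent of $\alpha$, so that
\begin{align*}
T_{\theta,\alpha,t}^{N}L_\lambda^{(\alpha,\theta)}=e^{-|\lambda|t}L_\lambda^{(\alpha,\theta)},\qquad T_{\theta,\alpha+1,t}^{N}L_\lambda^{(\alpha+1,\theta)}=e^{-|\lambda|t}L_\lambda^{(\alpha+1,\theta)}.
\end{align*}
Expanding $L_\lambda^{(\alpha,\theta)}=\sum_{\mu\le\lambda}a_{\lambda\mu}^{(\alpha)}P_\mu^{(\theta)}$ and applying the Jack-eigenvalue identity from the first step, a comparison of coefficients in $\mu$ yields the key intermediate statement
\begin{align*}
\LtaNN L_\lambda^{(\alpha,\theta)}=d_{\lambda,\theta,\alpha}\,L_\lambda^{(\alpha+1,\theta)},
\end{align*}
with explicit $d_{\lambda,\theta,\alpha}$. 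This is the level at which the shift $\alpha\mapsto\alpha+1$ is transported by the kernel, and it is exactly what the extra Dirichlet mass at the artificial point $x_0=0$ built into \dref{d:13} is designed to produce.

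Granted these ingredients, \eqref{:17b} follows immediately: applying both sides to $L_\lambda^{(\alpha,\theta)}$ gives $e^{-|\lambda|t}d_{\lambda,\theta,\alpha}L_\lambda^{(\alpha+1,\theta)}$ in each case, and the identity of Markov kernels extends from this basis to bounded test functions by a standard density argument in the $L^2$ space against the invariant measure of the $\beta$-Laguerre process. The main obstacle will be the Jack eigenfunction identity for $\LtaNN$: evaluating the Selberg-type integrals against Jack polynomials on the simplex $\nWNN(\xx)$ and tracking the precise dependence of the eigenvalue on $\alpha$ is delicate, and it is here that the additional Dirichlet parameter $s_0=(\alpha+1)\theta$ must be used in an essential way to match the transition $a_{\lambda\mu}^{(\alpha)}\mapsto a_{\lambda\mu}^{(\alpha+1)}$ in the Jack expansion of the multivariate Laguerre polynomials.
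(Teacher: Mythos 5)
Your first step (Jack polynomials are eigenfunctions of $\LtaNN$) and your intermediate identity $\LtaNN L_\lambda^{(\alpha,\theta)} = d_{\lambda,\theta,\alpha}L_\lambda^{(\alpha+1,\theta)}$ are both correct and indeed appear in the paper (Theorem~\ref{t:22} and Theorem~\ref{thm:Laguerre intertwine} respectively). However, your second step contains a critical error: multivariate Laguerre polynomials are \emph{not} eigenfunctions of the Laguerre process generator $A_{\theta,\alpha}^N = D_1^N + \theta(\alpha+1)E_0^N$ from \eqref{:11d}; they are eigenfunctions of the Calogero--Sutherland operator $\tilde H^{(L)} = D_1^N + (a+1)E_0^N - E_1^N$, which has the additional mean-reverting term $-E_1^N$ and corresponds to the \emph{Ornstein--Uhlenbeck} Laguerre SDE \eqref{:B1}, not to \eqref{:11d}. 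Consequently $T_{\theta,\alpha,t}^N L_\lambda^{(\alpha,\theta)} \ne e^{-|\lambda|t}L_\lambda^{(\alpha,\theta)}$: the Laguerre semigroup does not act diagonally on the Laguerre polynomial basis, and the conclusion you draw in the final paragraph does not follow. Your proposed route is actually the one the paper uses in Appendix~\ref{appendix:B} to handle the Ornstein--Uhlenbeck processes $\mathfrak T_{\theta,\alpha,t}^N$, where the spectral decomposition you invoke is valid.

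Because $T_{\theta,\alpha,t}^N$ has no such clean spectral decomposition, the paper instead works with the \emph{generator} intertwining (Lemma~\ref{l:33}): it shows that the generator preserves the finite-dimensional span $L(\{P_\kappa^\theta\}_{\kappa\subseteq\lambda})$, represents the relevant operators as matrices $M_1, M_2, M_3$, deduces $M_1 M_3 = M_2 M_1$ algebraically, and then exponentiates to $M_1 e^{tM_3} = e^{tM_2}M_1$, identifying the matrix exponential with the semigroup action via It\^o's formula \eqref{:41d}. Your final density-argument step also needs repair: the Laguerre process has no invariant probability measure (the drift pushes to infinity, unlike the OU variant), so an $L^2$ argument against the invariant measure is unavailable; the paper instead uses de Jeu's moment determinacy theorem combined with an exponential moment bound on $\XX^N$ to pass from agreement of moments to agreement of measures. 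You should either switch to a matrix-exponential argument on Jack polynomials as in Lemma~\ref{l:51} or restrict your diagonalization to the OU semigroup.
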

 
Assuming \tref{t:17}, we can easily prove \tref{t:16} as follows.

\medskip
\noindent
{\em Proof of \tref{t:16}  } \quad 
The equations \eqref{:11f} and \eqref{:17b} imply 
\begin{align}\label{:17c}
  T_{\theta, \alpha, t}^{N+1} \LtNNp \LtaNN &= \LtNNp T_{\theta, \alpha+1, t}^{N} \LtaNN = \LtNNp \LtaNN  T_{\theta, \alpha, t}^{N} 
.\end{align}
Therefore, combining \eqref{:17c} with \eqref{:11c}, we complete the proof.
\qed
\ms

Thus, it remains to establish \tref{t:17}, and we apply the method used in \cite{RaS18} to achieve this .
%They established intertwining relations by using Jack symmetric polynomials.
The first step in their approach involves establishing intertwining relations at the level of generators by using Jack symmetric polynomials.
We introduce key relations for Jack polynomials in the following subsection.
% To prove the intertwining formula, they elaborated on Jack symmetric polynomials.
% In their work, the first step is establishing intertwining relations in the level of generators, which allows us apply the moment method.

\subsection{Integral operators and Jack polynomials}
In what follows, let $\lambda =(\lambda_1 \ge \lambda_2 \ge \ldots) $ be a partition of an integer and define $l(\lambda )$ as the length of $\lambda $.
Let $P_\lambda(\xx_N ; \theta )=x_1^{\lambda_1}\cdots x_{N}^{\lambda_N}+ \cdots$ be the Jack symmetric polynomial parametrised by a partition $\lambda$.
See \sref{s:2} for the precise definition of Jack polynomials.
Hereafter, we use $P_{\lambda }^\theta (\xx_N ):=P_{\lambda } (\xx_N ;\theta )$ for simplicity. 

For $r\in\R$, the shifted factorial is defined as $(x)_r =\Gamma (x+r)/\Gamma(x)$.
We set
\begin{align}
c(\lambda, N,\theta \,; \alpha) &:=\frac{\Gamma((N+\alpha+1)\theta) }{\Gamma ( (\alpha+1 )\theta ) } \prod_{i=1}^{N} \frac{\Gamma((N+\alpha+1-i)\theta +\lambda _i )}{\Gamma ( (N+\alpha+2-i)\theta +\lambda _i)}
\\& \notag
=\prod_{i=1}^{N} \frac{((N+\alpha+1-i)\theta)_{\lambda _i }}{ ( (N+\alpha+2-i)\theta)_{\lambda _i} }
.\end{align}
Note that $c(\lambda, N,\theta \,; \alpha)\in \mathbb{Q}(\theta, \alpha)$ for fixed $\lambda$ and $N $.
Furthermore, we use $c(\lambda , N, \theta ):=c(\lambda, N,\theta \,; 0) $.

Jack polynomials satisfies the following relation in terms of $\LtNNp $, which is the key to showing the intertwining relation at the level of generators in \cite{Ass19,RaS18}.
\begin{lemma} \label{l:21} \cite{Oko98, OkO97}
Suppose $\theta>0$.
Then, for any partition $\lambda $ with $l(\lambda )\le N$ and $\xx_{N+1} \in \WNp $, we have
  \begin{align}\label{:22a}
   [\LtNNp P_{\lambda }^\theta ](\xx_{N+1} ) &=c(\lambda , N, \theta ) P_{\lambda }^\theta  (\xx_{N+1} ) 
,\end{align}  
where we write $ [\LtNNp P_{\lambda }^\theta ](\xx_{N+1} ) := \int P_{\lambda }^\theta (\yy_N ) \LtNNp (\xx_{N+1}, d \yy_N )$.
% where 
% \begin{align*}
%   c(\lambda , N, \theta )=\frac{\Gamma ((N+1) \theta )}{\Gamma(\theta)} \prod_{i=1}^{N} \frac{\Gamma((N+1-i)\theta +\lambda _i )}{\Gamma ( (N+2-i)\theta +\lambda _i)}
% .\end{align*}
\end{lemma}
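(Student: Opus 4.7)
The plan is to recast the claim as the integral identity
\begin{align*}
\int_{\WNNp(\xx)} P_\lambda^\theta(\yy)\prod_{1\le i<j\le N}(y_j-y_i)\prod_{i=1}^{N+1}\prod_{j=1}^N |x_i-y_j|^{\theta-1}\,d\yy = Z_\theta^N\, c(\lambda,N,\theta)\prod_{1\le i<j\le N+1}(x_j-x_i)^{2\theta-1}\,P_\lambda^\theta(\xx),
\end{align*}
and to prove it by identifying the left-hand side as a scalar multiple of $P_\lambda^\theta(\xx)$ through an eigenfunction argument.

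First, I would verify that $F_\lambda(\xx):=[\LtNNp P_\lambda^\theta](\xx)$ is a symmetric polynomial in $\xx_{N+1}$ of total degree $|\lambda|$. Symmetry in $\xx$ follows from the manifest $\xx$-symmetry of the weight $\prod_{i,j}|x_i-y_j|^{\theta-1}$; polynomiality (and divisibility by no additional $\xx$-factors) is obtained by a scaling argument in $\xx$ that bounds the degree by $|\lambda|$, combined with vanishing on the walls $x_i=x_{i+1}$ being exactly of order matching the Vandermonde denominator $\prod(x_j-x_i)^{2\theta-1}$.

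Second, I would show $F_\lambda$ is proportional to $P_\lambda^\theta(\xx)$ using the characterization of Jack polynomials as the unique (up to scalar) symmetric polynomials that are eigenfunctions of the Sekiguchi--Debiard operator $D_N^\theta$ with leading monomial $m_\lambda$ in the dominance order. Establishing the intertwining $D_{N+1}^\theta\,\LtNNp=\LtNNp\,D_N^\theta$ (up to a constant shift) via integration by parts places $F_\lambda$ in the same Jack eigenspace as $P_\lambda^\theta$; since the Sekiguchi eigenvalues separate partitions and $F_\lambda$ has degree $|\lambda|$, this forces $F_\lambda=c\cdot P_\lambda^\theta$ for some constant $c$.

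Third, I would compute $c=c(\lambda,N,\theta)$ inductively. The base case $\lambda=\emptyset$ gives $c=1$ from the Dirichlet normalisation in \pref{p:13}. For $\lambda=(1)$, Vieta's formulas applied to the random polynomial in \dref{d:12} give $\sum_j y_j=\sum_i x_i-\sum_i w_i x_i$, and the Dirichlet expectation $\mathbb{E}[w_i]=1/(N+1)$ yields $\int p_1(\yy)\,\LtNNp(\xx,d\yy)=\frac{N}{N+1}\,p_1(\xx)$, matching $c((1),N,\theta)=N\theta/((N+1)\theta)=N/(N+1)$. For the general case, I would induct on $|\lambda|$ via the Jack Pieri rule: multiplying $P_\mu^\theta$ by $p_1$ yields a sum $\sum_\lambda a_{\mu\lambda}\,P_\lambda^\theta$ over partitions $\lambda$ obtained from $\mu$ by adding one box, and matching this with moment identities for $\yy$ under the Dirichlet measure produces recurrences which telescope into the product formula for $c(\lambda,N,\theta)$.

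The main obstacle is the second step's operator intertwining: verifying $D_{N+1}^\theta\,\LtNNp=\LtNNp\,D_N^\theta$ requires delicate integration by parts against a second-order differential operator in $\xx$, with careful treatment of the boundary contributions on the interlacing walls $y_j=x_i$, where $|x_i-y_j|^{\theta-1}$ is integrable for $\theta>0$ but the boundary terms need to cancel. A cleaner alternative that bypasses Sekiguchi entirely is to compute the moments $\int m_\mu(\yy)\,\LtNNp(\xx,d\yy)$ directly by expanding the Dirichlet expectation in elementary symmetric functions of $\xx$ and matching against the monomial-basis expansion of $P_\lambda^\theta$ via the Okounkov--Olshanski binomial formula; this is combinatorially heavier but avoids any analytic subtleties.
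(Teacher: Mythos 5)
The paper does not prove \lref{l:21}; it cites it from \cite{Oko98, OkO97} and uses it as a black box. So there is no in-paper proof to compare against, and your proposal is a reconstruction of the literature argument rather than a rederivation of something in this manuscript.

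As a reconstruction, the outline has the right general shape but the two load-bearing steps are underspecified in ways that matter. The phrase ``intertwining $D_{N+1}^\theta\,\LtNNp=\LtNNp\,D_N^\theta$ up to a constant shift'' is not correct as stated: the eigenvalue of $D_2^N$ on $P_\lambda^\theta$ is $e(\lambda,N,\theta)=2B(\lambda')-2\theta B(\lambda)+2\theta(N-1)|\lambda|$, so $e(\lambda,N+1,\theta)-e(\lambda,N,\theta)=2\theta|\lambda|$ depends on $\lambda$. The identity you would actually need is $D_2^{N+1}\,\LtNNp=\LtNNp\big(D_2^N+2\theta E_1^N\big)$, and even then $D_2$ alone does not separate partitions, so you would have to carry the whole Sekiguchi--Debiard family; verifying these intertwinings by integration by parts against the singular weight $\prod_{i,j}|x_i-y_j|^{\theta-1}$ is precisely the hard analytic content you are deferring. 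Likewise, the polynomiality of $F_\lambda(\xx)=[\LtNNp P_\lambda^\theta](\xx)$ for non-integer $\theta$ is not a routine observation: the scaling argument gives homogeneity of degree $|\lambda|$ only once polynomiality is already known, and polynomiality of a Dixon--Anderson integral is itself one of the nontrivial outputs of the Okounkov--Olshanski theory, not an input. Note that the cited references do not go through a differential-operator intertwining at all: \cite{OkO97} works with shifted (interpolation) Jack polynomials characterized by vanishing conditions together with the binomial formula, and \cite{Oko98} proves a $q$-integral representation and takes $q\to1$. Your ``cleaner alternative'' at the end --- computing $\int m_\mu(\yy)\,\LtNNp(\xx,d\yy)$ from Dirichlet moments and matching via the binomial formula \eqref{:20b} --- is much closer in spirit to what those papers actually do and should be promoted to the primary route, with the $\lambda=(1)$ Vieta check (which you carry out correctly, giving $c((1),N,\theta)=N/(N+1)$) as a sanity test rather than the base of an induction via Pieri.
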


The equation \eqref{:22a} suggests that Jack polynomials are eigenfuncions of $\LtNNp $.
However, strictly speaking, this is not correct: whilst $P_{\lambda }^\theta$ on the left-hand side of \eqref{:22a} is in $N$ variables, on the right-hand side, it is in $N+1$ variables.
On the other hand, we establish that Jack polynomials are indeed eigenfunctions of $\LtaNN $.
%, which is crucial for showing the intertwining relation.

\begin{theorem}\label{t:22}
Suppose $\theta >0$ and $\alpha> -1$.
Then, for any $\lambda $ with $l(\lambda )\le N$ and $\xx_{N} \in \nWN $, we have
\begin{align} \label{:22b}
 [\LtaNN P_{\lambda}^\theta]  (\xx_N) =   c(\lambda, N ,\theta \,; \alpha ) P_{\lambda }^\theta (\xx_N)
.\end{align}
\end{theorem}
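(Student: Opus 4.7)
The plan is to deduce the identity from \lref{l:21} by degenerating the input configuration to introduce zeros, and then to extend the resulting integer-shift case to all $\alpha > -1$ by analytic continuation in $\alpha$.

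For $\alpha = k - 1$ with $k \in \N_{\geq 1}$, consider the boundary point $\tilde{\xx} = (0, \ldots, 0, x_1, \ldots, x_N) \in W^{N+k}$ obtained by prepending $k$ zeros to $\xx_N$. Although \pref{p:13} addresses only the interior, \dref{d:12} applies directly at this boundary configuration, and the defining random polynomial $y \mapsto \sum_{i=1}^{N+k} w_i \prod_{j \ne i}(y - \tilde{x}_j)$ factors as
\[
y^{k-1} \Bigl( \tilde{w}_0 \prod_{\ell=1}^N (y - x_\ell) + y \sum_{i=1}^N \tilde{w}_i \prod_{\ell \ne i}(y - x_\ell) \Bigr),
\]
where $\tilde{w}_0 = w_1 + \cdots + w_k$ and $\tilde{w}_i = w_{k+i}$ for $1 \le i \le N$. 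By the standard aggregation property of the Dirichlet distribution, $(\tilde{w}_0, \tilde{w}_1, \ldots, \tilde{w}_N) \sim \mathrm{Dirichlet}(k\theta, \theta, \ldots, \theta)$, which is exactly the distribution in \dref{d:13} for $\alpha = k - 1$. Therefore the kernel $\Lambda_\theta^{N+k, N+k-1}(\tilde{\xx}, \cdot)$ is supported on configurations whose first $k-1$ components are $0$, with the remaining $N$ components distributed as $\LtaNN(\xx_N, \cdot)$ for $\alpha = k - 1$.

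Applying \lref{l:21} to $\tilde{\xx}$ and using the standard stability of Jack polynomials, namely $P_\lambda^\theta(0, z_1, \ldots, z_M) = P_\lambda^\theta(z_1, \ldots, z_M)$ whenever $l(\lambda) \le M$, to strip zeros from both sides yields
\[
[\LtaNN P_\lambda^\theta](\xx_N) = c(\lambda, N+k-1, \theta) P_\lambda^\theta(\xx_N).
\]
Since $\lambda_i = 0$ for $i > N$, the trailing factors in $c(\lambda, N+k-1, \theta)$ are trivial and a direct computation yields $c(\lambda, N+k-1, \theta) = c(\lambda, N, \theta \st k - 1)$. This establishes \eqref{:22b} on the entire set $\alpha \in \{0, 1, 2, \ldots\}$.

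To pass from integer $\alpha$ to all $\alpha > -1$, I would appeal to analytic continuation in $\alpha$. By \pref{p:15}, the left-hand side of \eqref{:22b} is holomorphic in $\alpha$ on $\{\mathrm{Re}\,\alpha > -1\}$, with the $\alpha$-dependence entering only through $y_i^{(\alpha+1)\theta - 1}$, $x_i^{-(\alpha+2)\theta + 1}$ and a ratio of Gamma functions; the right-hand side is manifestly so. Carlson's theorem, applied coefficient by coefficient to the monomial expansion in $\xx_N$, then upgrades the equality on $\{0, 1, 2, \ldots\}$ to equality on the whole domain, once the mild growth in $\alpha$ of those coefficients (inherited from the bounded integration region) is controlled. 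The main obstacle is precisely this analytic-continuation step: while the algebraic argument for integer $\alpha$ is essentially combinatorial, making Carlson's theorem (or a substitute) apply uniformly in $\xx_N$ requires some care, since the $\alpha$-dependence of the integrand is exponential through $y_i^{(\alpha+1)\theta-1}$. An attractive alternative would be to bypass analytic continuation by establishing a direct Dixon--Anderson eigenfunction identity for Jack polynomials with the unequal weights $((\alpha+1)\theta, \theta, \ldots, \theta)$, but that appears to require nontrivial input beyond \lref{l:21}.
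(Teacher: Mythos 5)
Your integer-$\alpha$ reduction is correct and takes a genuinely different route from the paper. You pad the input to $(0,\ldots,0,x_1,\ldots,x_N)\in W^{N+k}$ and observe that the Dixon--Anderson random polynomial factors as $y^{k-1}$ times a degree-$N$ polynomial, which after Dirichlet aggregation reproduces the defining polynomial of $\LtaNN$ at $\alpha=k-1$; the residual identity $c(\lambda,N+k-1,\theta)=c(\lambda,N,\theta\,;k-1)$ is indeed trivial because $\lambda_i=0$ for $i>N$. The paper instead applies \lref{l:21} only once, at the single point $(0,\xx_N)\in\mathring W^{N+1}$, but to the shifted partition $\lambda+(\theta\alpha)1_N$, using $P^\theta_{\lambda+1_N}(\xx_N)=\big(\prod_i x_i\big)P^\theta_\lambda(\xx_N)$; that version requires $\theta\alpha$ (rather than $\alpha$) to be a non-negative integer. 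Both are valid reductions to \lref{l:21}, and both then require an analytic continuation.

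The gap is exactly where you locate it, but your worry about the continuation step is misplaced. The $\alpha$-dependence of the integrand in \eqref{:11b} is $\prod_i(y_i/x_i)^{\theta\alpha}$, and since $0\le y_i\le x_i$ on $\nWNN(\xx)$, this has modulus at most $1$ for $\Re(\theta\alpha)\ge 0$ and modulus exactly $1$ when $\theta\alpha$ is purely imaginary: there is no exponential blowup. The normalisation constant and the eigenvalue $c(\lambda+z1_N,N,\theta)$ grow only polynomially in any sector $|\arg z|\le\pi-\delta$ by $\Gamma(z+a)/\Gamma(z+b)\sim z^{a-b}$. This is precisely how the paper closes the argument: it fixes $\xx_N$ and applies Carlson's theorem to the single scalar function $f(z)$ of $z=\theta\alpha$ (so no uniformity in $\xx_N$ and no coefficient-by-coefficient decomposition are needed), verifies $|f(z)|\le Ce^{\tau|z|}$ for $\Re z>0$ and $|f(iy)|\le Ce^{c|y|}$ with $c<\pi$, concludes $f\equiv 0$ on $\Re z\ge 0$, and then extends by analyticity to $\Re z>-\theta$ (where $y_i^{z+\theta-1}$ remains integrable), which covers all $\alpha>-1$. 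Carrying out those growth estimates would close your proof; the ``attractive alternative'' you mention as a way to avoid Carlson is, up to the integer-$\alpha$ seed you already produced, not pursued in the paper either.
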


Furthermore, using \tref{t:22}, we can compute the action of $\LtaNN $ on multivariate Laguerre polynomials and multivariate hypergeometric functions, as demonstrated in the appendix.

%Remark that, if $\alpha\neq 0$, the equation \eqref{:22b} is not covered by \eqref{:22a}.

\subsection{Organisation of this paper}
The present paper is organised as follows.
We collect some facts about Jack polynomials in \sref{s:2}.
In \sref{s:3}, we give the proof of \tref{t:22}.
%In \sref{s:4}, we establish the shifted intertwining relations at the level of generators.
\sref{s:5} is devoted to the proof of \tref{t:17}.
In \sref{s:6}, we observe that the kernel $\LtaNNp $ has an interpretation in terms of squared singular values of invariant random matrices for classical parameter $\theta =1/2,1, 2 $.
In Appendix \ref{appendix:A}, we apply \tref{t:22} to obtain a formula for symmetric functions.
In Appendix \ref{appendix:B}, we prove that Ornstein-Uhlenbeck counterparts of Laguerre processes satisfy intertwining relations similar to \tref{t:16} and \tref{t:17}.

\section{Preliminaries on Jack symmetric polynomials}\label{s:2}

We begin by introducing Jack polynomials.
See \cite[Chapter 2]{ViK95} for further details.
We introduce the differential operators in $N$-variables
\begin{align*}
E_k ^N &= \sum_{i=1}^{N} x_i^k \PD{}{x_i}, 
\\
D_k ^N &= \sum_{i=1}^{N} x_i^k \frac{\partial ^2 }{ \partial x_i^2 } + 2\theta \sum_{i=1}^{N} \sum_{1\le j\le N, j\neq i} \frac{x_i^k}{x_i-x_j} \PD{}{x_i}
.\end{align*}

Let $P_\lambda^ \theta  (\xx_N ) :=P_\lambda(\xx_N ; \theta )=x_1^{\lambda_1}\cdots x_{N}^{\lambda_N}+ \cdots $ be the Jack polynomial parametrised by a partition $\lambda $.
By definition, $P_\lambda ^\theta (\xx_N )$ is the symmetric polynomial eigenfunction of $D_2^N $: 
\begin{align}\label{:31a}
D_2^N  P_\lambda^\theta  (\xx_N ) =  e(\lambda, N, \theta) P_\lambda ^\theta (\xx_N )
,\end{align}
where 
\begin{align*}
  e(\lambda, N, \theta) :=2 B(\lambda' ) -2\theta B(\lambda ) +2\theta(N-1)|\lambda|  
.\end{align*}
Here, $|\lambda| =\sum \lambda _i$, $B(\lambda)=\sum (i-1)\lambda_i $, and $\lambda' $ is the conjugate partition of $\lambda $.
The eigenfunction is unique up to normalisation, and we take normalisation such that the leading coefficient of $P_\lambda ^\theta $ is $1$.
Specifically, 
\begin{align}\label{:20a}
  P_\lambda^\theta(1_{N} ) = \prod_{1\le i <j \le N}(\lambda_i-\lambda_j +\theta(j-i))_\theta \prod_{k=1}^{N} \frac{\Gamma(\theta)}{\Gamma(\theta k)}
,\end{align}
where $1_N $ denotes the $N$-dimensional vector whose components are all $1$ \cite[(6.4)]{OkO97}.
% Note that a straightforward computation yields 
% \begin{align*}
%   P_\lambda^\theta(1_{N} ) = \prod_{1\le i < j \le N} \frac{ (\theta(j-i+1 ) )_{\lambda_i-\lambda_j }}{(\theta(j-i ) )_{\lambda_i-\lambda_j }}
% .\end{align*}

%Here we collect some facts about Jack polynomials.
From \cite[(2.13a), (2.13b), (2.13d)]{BaF97}, Jack polynomials satisfy the following relations:
\begin{align} \label{:40a}
  E_0^N  P_\lambda ^\theta (\xx_N  ) &=P_{\lambda} ^\theta (1_N ) \sum_{i=1}^{l (\lambda) } \binom{\lambda}{ \lambda_{(i)}} _\theta
  \frac{P_{\lambda_{(i)}}^\theta  (\xx_N  )}{P_{\lambda_{(i)}}^\theta  (1_N  )}
  ,\\\label{:40c}
   E_1^N P_\lambda^\theta  (\xx_N ) &=|\lambda| P_\lambda ^\theta  (\xx_N  )
   ,\\\label{:40b}
  D_1^N P_\lambda^\theta  (\xx_N  )&= P_{\lambda}^\theta (1_N) \sum_{i=1}^{l(\lambda) } 
  \binom{\lambda}{ \lambda_{(i)}} _\theta
  (\lambda_i-1+(N-i)\theta )\frac{P_{\lambda_{(i)}}^\theta  (\xx_N  )}{P_{\lambda_{(i)}}^\theta  (1_N  )}
.\end{align}
Here, $\lambda_{(i)}$ denotes the partition given by $\lambda_{(i)} =(\lambda_1,\ldots, \lambda_{i-1}, \lambda_{i} -1, \lambda_{i+1 }, \cdots)$, and $ \binom{\lambda }{\rho }_\theta $ is the generalised binomial coefficient introduced in \cite{Las90}, determined by the expansion
\begin{align}\label{:20b}
  \frac{P_{\lambda }^\theta  (1_N+\xx_N  )}{P_{\lambda }^\theta  (1_N )}=\sum_{m=0}^{|\lambda| } \sum_{|\rho|=m}  
  \binom{\lambda }{\rho }_\theta 
   \frac{P_{\rho }^\theta  (\xx_N )}{P_{\rho  }^\theta  (1_N  )}
.\end{align}

\section{Integral operators for Jack polynomials} \label{s:3}
We now prove \tref{t:22} using \lref{l:21}.

\medskip 
\noindent
{\em Proof of \tref{t:22}} \quad 
Remark that it is sufficient to show \eqref{:22b} for any $\xx_N \in \mathring{W}_{\ge}^N $.
Actually, by the definition \dref{d:13} and the fact that the roots of a polynomial are continuous function of its coefficients, we have $\limi{n} \LtaNN (\xx_N^n, \cdot) = \LtaNN (\xx_N, \cdot ) $ weakly if $\limi{n} \xx_N ^n =\xx_N$. 
Furthermore, since $\LtaNN (\xx_N^n, \cdot)$ is support compact, we obtain $\limi{n} [ \LtaNN P_{\lambda }^{\theta}] (\xx_N^n) =[\LtaNN P_{\lambda }^{\theta}] (\xx_N)$.

Let us first show \eqref{:22b} under the assumption that $\theta \alpha $ is a non-negative integer.
Let $\xx_N =(x_1,\ldots, x_N) \in \mathring{W}_{\ge}^N $.
Then, applying \eqref{:22a} for $( 0, \xx _{N}) \in \mathring{W}^{N+1} $, we obtain
\begin{align} \label{:22c}
  [\LtNNp P_{\lambda +(\theta \alpha) 1_{N} }^\theta ] (( 0, \xx _{N}) ) & =c(\lambda + (\theta\alpha) 1_{N}, N, \theta ) P_{\lambda + (\theta \alpha) 1_{N} } ^\theta (( 0, \xx _{N})  )
  \\ & \notag
=c(\lambda + (\theta \alpha) 1_{N}, N, \theta )  \Big( \prod_{i=1}^{N } x_ i^{\theta \alpha}  \Big) P_{\lambda }^\theta  (\xx_{N} ) 
.\end{align}  
Here, we used the formula $P_{\lambda + 1_{N}}^\theta  (\xx_N  ) =\big( \prod_{i=1}^{N}x_i \big) P_{\lambda} ^\theta (\xx_N )$ \cite[(1.2)]{OkO97}.
On the other hand, the left-hand side of \eqref{:22c} turns out to be
\begin{align}&\label{:22d}
  \frac{1}{Z_{\theta }^{N} }  \int_0^{x_1} \ldots \int_{x_{N-1}}^{x_N} \frac{\prod_{1\le i<j\le N } (y_j-y_i) }{\prod_{1\le i<j\le N } (x_j-x_i) ^{2\theta-1}} \prod_{i,j=1}^{N} |x_i-y_j|^{\theta -1}  \prod_{i=1}^{N} \frac{ y_i^{\theta -1}}{x_i^{2\theta-1 }}  P_{\lambda +(\theta \alpha )1_{N}} ^\theta (\yy ) d\yy
\\& \notag
=  \frac{1}{Z_{\theta }^{N}}  \int_0^{x_1} \ldots \int_{x_{N-1}}^{x_N}  \frac{\prod_{1\le i<j\le N } (y_j-y_i) }{\prod_{1\le i<j\le N } (x_j-x_i) ^{2\theta-1}}\prod_{i,j=1}^{N} |x_i-y_j|^{\theta -1}  \prod_{i=1}^{N}\frac{y_j^{\theta(\alpha +1) -1}}{ x_i^{2\theta-1 }} P_{\lambda  }^\theta  (\yy ) d\yy
\\& \notag 
=   \frac{Z_{\theta, \alpha}^{N}}{Z_{\theta }^{N}} \Big( \prod_{i=1}^{N } x_ i^{\theta \alpha} \Big)  [\LtaNN  P_{\lambda  }^\theta ] (\xx_N  )
\end{align}
from \eqref{:11a} and \eqref{:11b}.
Combining \eqref{:22c} and \eqref{:22d}, we have  
\begin{align}\label{:22e}
  \LtaNN P_{\lambda}^\theta  (\xx_N) = \frac{ Z_{\theta }^{N}  c(\lambda +(\theta\alpha) 1_N, N, \theta ) }{Z_{\theta, \alpha }^{N}} P_{\lambda }^\theta  (\xx_N   )
.\end{align}
Thus, the proof of \eqref{:22b} is finished if $\theta \alpha $ is a non-negative integer.

We now turn to the case of general $\theta >0 $ and $\alpha >-1$.
For $\theta $ fixed, we set the function for $z \in \C$ 
\begin{align*}
  f(z)&:= \int_0^{x_1} \ldots \int_{x_{N-1}}^{x_N}  \frac{\prod_{1\le i<j\le N } (y_j-y_i) }{\prod_{1\le i<j\le N } (x_j-x_i) ^{2\theta-1}} \prod_{i,j=1}^{N}|x_i-y_j|^{\theta -1}  \prod_{i=1}^{N}\frac{ y_i^{z+ \theta -1}}{ x_i^{z+ 2\theta-1 }} P_{\lambda  }^\theta  (\yy  ) d\yy
  \\& \qquad
   -Z_{\theta}^{N} c(\lambda +z 1_{N}, N, \theta )  P_{\lambda }^\theta  (\xx_N   )
.\end{align*}
The equation \eqref{:22e} implies $f(n)=0$ for any non-negative integer $n$.
The function $f(z )$ is analytic on $\Re z >0 $ and continuous on $\Re z \ge 0$.
Furthermore, for some constants $C, \tau >0$ and $c <\pi$,  we have
\begin{align*}
  |f(z) | \le C e^{\tau |z|} \text{ for any }\Re z >0
  , \\
  |f(iy ) | \le C e^{c |y|}\text{ for any }y\in \R 
.\end{align*}
Here, to estimate $c(\lambda +z 1_{N}, N, \theta )$, we have used  $\Gamma (z+a) / \Gamma (z+b) \sim  z^{a-b}$ as $z \to \infty $ in $|\arg z |\le\pi -\delta$ .
Therefore, the Carlson theorem implies $f(z)=0$ for $\Re z \ge 0$.
Because $f$ is analytic on  $\Re z >-\theta $, we have $f\equiv 0$ on $\Re z >-\theta $.
Thus, we obtain $f(\theta \alpha )=0$ for any $\theta>0$ and $\alpha>-1$, which complete the proof of \tref{t:22}.
\qed
\ms

We show some examples of \tref{t:22}.
When $N=1$, for a non-negative integer $\lambda $, the equation \eqref{:22b} is rephrased as 
% \begin{align*}
%   \frac{\Gamma ((\alpha+2)\theta )}{\Gamma(\theta) \Gamma((\alpha+1 )\theta )}   \int_0^{x} |x-y|^{\theta -1}  \frac{ y^{\theta(\alpha+1)-1 }}{ x^{\theta(\alpha+2) -1}  } P_{\lambda }^{\theta } (y) dy = \frac{((\alpha+1)\theta)_{\lambda } }{( (\alpha+2)\theta )_{\lambda} } P_{\lambda }^\theta (x )
% \end{align*}
% which is equivalent to  
\begin{align}\label{:22h}
  \frac{\Gamma ((\alpha+2)\theta )}{\Gamma(\theta) \Gamma((\alpha+1 )\theta)}   \int_0^{x} (x-y)^{\theta -1}  \frac{ y^{\theta(\alpha+1)-1 }}{ x^{\theta(\alpha+2) -1}  } \frac{ y^\lambda }{( (\alpha +1)\theta )_{\lambda} } dy = \frac{ x^\lambda }{( (\alpha +2)\theta )_{\lambda} }
\end{align}
from $P_{\lambda }^\theta (x) =x^\lambda $.
The equation \eqref{:22h} can be shown also by straightforward computation.

When $N=2$ and $\lambda =(\lambda_1, \lambda_2)$, the equation \eqref{:22b} is written as
\begin{align}\label{:22f}
  \frac{\Gamma ((\alpha+3)\theta )}{\Gamma(\theta)^2 \Gamma((\alpha+1 )\theta)} \int_{0}^{x_1} \int_{ x_1}^{x_2} \frac{  (y_2-y_1) }{(x_2-x_1) ^{2\theta-1}} \prod_{i,j=1}^{2}|x_i-y_j|^{\theta -1}  \frac{ (y_1 y_2)^{\theta(\alpha+1)-1 }}{ (x_1 x_2)^{\theta(\alpha+2)-1}  } P_{\lambda }^\theta (y_1,y_2)  dy_1 dy_2 
 \\ \notag
  =\prod_{i=1}^{2} \frac{((\alpha+3-i)\theta)_{\lambda _i }}{ ( (\alpha+4-i)\theta)_{\lambda _i} } P_{\lambda }^\theta (x_1, x_2)
.\end{align}
The Jack polynomial in two-variables has the following representation \cite[(10.15)]{Koo15}: 
\begin{align*}
P_{\lambda }^\theta (x_1, x_2) &=x_1^{\lambda_1} x_2 ^{\lambda _2} {}_{2}F_{1} \Big( \begin{matrix}
-\lambda_1 + \lambda_2, \, \theta
\\
1-\lambda_1+\lambda_2 -\theta 
\end{matrix}  \,;\, \frac{x_2}{x_1} \Big)
\\&
=\frac{(\lambda_1 -\lambda_2)!}{(\theta)_{\lambda_1 -\lambda_2 }}(x_1 x_2)^{\frac{\lambda_1+\lambda_2}{2}} C_{\lambda_1-\lambda_2}^\theta \bigg( \frac{x_1 +x_2}{2(x_1 x_2)^{\frac{1}{2}}} \bigg)
,\end{align*}
where ${}_{2}F_{1}$ is the hypergeometric function and $C_{m}^\theta $ is the Genenbauer polynomial.
Therefore, the equation \eqref{:22f} gives an integral representation of these special functions. %hyper-geometric function, or the Gegenbauer polynomials.
To the best of our knowledge, a direct derivation of the equation \eqref{:22f} using hypergeometric functions or the Gegenbauer polynomials is not known.
%It would be interesting to obtain \eqref{:22f} by direct computation.

The equation \eqref{:22b} for a rectangular partition $\lambda =(m,\cdots, m)$ with $l(\lambda)=N$ is 
\begin{align}&
\frac{\Gamma(\theta(N+\alpha+1))}{\Gamma(\theta )^N \Gamma (\theta(\alpha+1))} \int_{0}^{x_1} \cdots \int_{x_{N-1}}^{x_N}  \frac{\prod_{1\le i<j\le N } (y_j-y_i) }{\prod_{1\le i<j\le N } (x_j-x_i) ^{2\theta-1}} \prod_{i,j=1}^{N}|x_i-y_j|^{\theta -1} \prod_{i=1}^{N} \frac{ y_i^{\theta(\alpha+1)-1+m }}{ x_i^{\theta(\alpha+2) -1 +m }  }   d\yy 
\\&\notag
=\prod_{i=1}^{N}  \frac{((N+\alpha+1-i)\theta)_{m } }{ ( (N+\alpha+2-i)\theta)_{m} }  
,\end{align}
which is just a paraphrase of $ \int \Lambda_{\theta, \alpha+m/\theta, N }^N (\xx, d\yy)=1$.

%\section{Intertwining relations at the level of generators}\label{s:4}
\section{Proof of \tref{t:17}}\label{s:5}
%\subsection{The shifted intertwining relations in the level of generators}
We observed the actions of differential operators $E_k^N, D_k^N$ on Jack polynomials in \sref{s:2}.
Combining these with \tref{t:22}, we obtain intertwining relations between these differential operators and $\LtaNN $.

\begin{lemma}\label{l:32}
Assume that  $\theta >0$ and $\alpha>-1$.
Then, for any partition $\lambda $ with $l(\lambda )\le N$, the following holds for $\xx_N\in \nWN $:
  \begin{align}
    \label{:31b}
    [ E_1^N  \LtaNN P_{\lambda }^\theta ]  (\xx_N  ) &=[ \LtaNN E_1^N P_{\lambda }^\theta ] (\xx_N )  
    \\  
  \label{:41a}
  [\big( D_1^N + \theta(\alpha +2)E_0^N \big) \LtaNN P_{\lambda}^\theta ] ( \xx_N ) &=[ \LtaNN \big(  D_1^N + \theta(\alpha +1)E_0^N \big) P_{\lambda}^\theta ] ( \xx_N )
.\end{align}

\begin{proof}
The equation \eqref{:31b} results from \eqref{:22b} and \eqref{:40c}.
From \eqref{:22b}, \eqref{:40a}, and \eqref{:40b}, we have the equations 
  \begin{align}& \label{:41b}
  [\big( D_1^N + \theta(\alpha +2)E_0^N \big)\LtaNN P_{\lambda}^\theta ] ( \xx_N )
  % \\& \notag
  %  = c(\lambda,N, \theta \,; \alpha )  \Big( D_1^N + \theta(\alpha +2)E_0^N \Big)  P_\lambda^\theta  (\xx_N ) 
   \\& \notag
   = c (\lambda,N, \theta \,; \alpha )P_{\lambda}^\theta (1_N )  \bigg[ \sum_{i=1}^{l(\lambda )} 
   \begin{pmatrix}
   \lambda \\ \lambda_{(i)}  
   \end{pmatrix}_\theta
   \big\{(N-i + \alpha+2)\theta + \lambda_i-1 \big\}\frac{P_{\lambda_{(i)}}^\theta  (\xx_N  )}{P_{\lambda_{(i)}}^\theta  (1_N )} \bigg]
,\\&
\label{:41c}
[\LtaNN \big( D_1^N + \theta(\alpha +1)E_0^N \big)  P_{\lambda}^\theta ] ( \xx_N  )
% \\& \notag
% = P_{\lambda}^\theta (1_N ) \LtaNN  \bigg[ \sum_{i=1}^{l(\lambda )} 
%   \begin{pmatrix}
%   \lambda \\ \lambda_{(i)}  
%   \end{pmatrix}_\theta
%   \big\{(N-i+\alpha+1)\theta  + \lambda_i-1 \big\}\frac{P_{\lambda_{(i)}}^\theta  (\xx_N  )}{P_{\lambda_{(i)}}^\theta  (1_N  )} \bigg]
\\& \notag 
= P_{\lambda}^\theta (1_N )  \sum_{i=1}^{l(\lambda )} 
  \begin{pmatrix}
  \lambda \\ \lambda_{(i)}  
  \end{pmatrix}_\theta
  c(\lambda_{(i)}, N, \theta \,; \alpha )\big\{(N-i+\alpha+1)\theta + \lambda_i-1 \big\}\frac{P_{\lambda_{(i)}}^\theta  (\xx_N  )}{P_{\lambda_{(i)}}^\theta  (1_N )}
.\end{align}
A direct computation yields, for any $i$, 
\begin{align*}
  c(\lambda_{}, N , \theta \,; \alpha )\big\{(N-i+\alpha+2)\theta + \lambda_i-1 \big\} =c(\lambda_{(i)}, N, \theta \,; \alpha )\big\{(N-i+\alpha+1)\theta + \lambda_i-1 \big\}  
,\end{align*}
which implies the coefficients of $P_{\lambda_{(i)}}$ in \eqref{:41b} and \eqref{:41c} are the same.
Thus, we have proved \eqref{:41a}.

\end{proof}
\end{lemma}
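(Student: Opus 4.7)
The plan is to reduce both intertwining identities to algebraic manipulations on the Jack basis, exploiting Theorem~\ref{t:22} together with the explicit action formulas \eqref{:40a}--\eqref{:40b} for $E_0^N, E_1^N, D_1^N$ on $P_\lambda^\theta$. The statement \eqref{:31b} is the warm-up: by \eqref{:40c}, $E_1^N$ acts as multiplication by the scalar $|\lambda|$ on $P_\lambda^\theta$, and by Theorem~\ref{t:22} so does $\LtaNN$, with eigenvalue $c(\lambda, N, \theta\,;\alpha)$. Both sides of \eqref{:31b} therefore collapse to $|\lambda|\,c(\lambda, N, \theta\,;\alpha)\, P_\lambda^\theta(\xx_N)$; on a single Jack polynomial, $E_1^N$ and $\LtaNN$ are simultaneously diagonal and trivially commute.

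For the substantive identity \eqref{:41a}, my plan is to expand both sides in the Jack basis and compare coefficients. On the left, I first invoke Theorem~\ref{t:22} to turn $\LtaNN P_\lambda^\theta$ into the scalar multiple $c(\lambda, N, \theta\,;\alpha)\, P_\lambda^\theta$, and then expand $(D_1^N + \theta(\alpha+2) E_0^N) P_\lambda^\theta$ via \eqref{:40a} and \eqref{:40b} as a sum over the ``lowered'' polynomials $P_{\lambda_{(i)}}^\theta$. On the right, I reverse the order: I first apply \eqref{:40a} and \eqref{:40b} to $(D_1^N + \theta(\alpha+1) E_0^N) P_\lambda^\theta$ to get a linear combination of the $P_{\lambda_{(i)}}^\theta$, and then apply Theorem~\ref{t:22} to each term, picking up the factor $c(\lambda_{(i)}, N, \theta\,;\alpha)$. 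Both sides then take the form $P_\lambda^\theta(1_N)\sum_i \binom{\lambda}{\lambda_{(i)}}_\theta \cdot (\text{coefficient}_i) \cdot P_{\lambda_{(i)}}^\theta(\xx_N)/P_{\lambda_{(i)}}^\theta(1_N)$, and it suffices to match coefficients index by index.

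Concretely, the task reduces to verifying, for each admissible $i$, the scalar identity
\begin{equation*}
c(\lambda, N, \theta\,;\alpha)\bigl\{(N-i+\alpha+2)\theta + \lambda_i - 1\bigr\} = c(\lambda_{(i)}, N, \theta\,;\alpha)\bigl\{(N-i+\alpha+1)\theta + \lambda_i - 1\bigr\}.
\end{equation*}
Using the product form of $c(\lambda, N, \theta\,;\alpha)$ in Pochhammer symbols, the ratio $c(\lambda,N,\theta\,;\alpha)/c(\lambda_{(i)},N,\theta\,;\alpha)$ telescopes (only the $i$-th factor differs) to $\frac{(N+\alpha+1-i)\theta + \lambda_i - 1}{(N+\alpha+2-i)\theta + \lambda_i - 1}$ via the one-step identity $(x)_{\lambda_i} = (x)_{\lambda_i-1}(x+\lambda_i-1)$, which is exactly the required relation. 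This Pochhammer check is the only computation that needs any care, but it is entirely elementary. The real content, and the design choice that makes the lemma work, lies in the ``twisted'' combination $D_1^N + \theta(\alpha+2) E_0^N$ on the left: the shift $\alpha+2\mapsto\alpha+1$ between the two sides is precisely tuned to the shift $\lambda_i\mapsto\lambda_i-1$ that the telescoping produces.
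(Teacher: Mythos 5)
Your proposal is correct and follows essentially the same route as the paper's proof: derive \eqref{:31b} by combining the eigenvalue relations for $E_1^N$ and $\LtaNN$, then expand both sides of \eqref{:41a} in the lowered Jack basis $\{P_{\lambda_{(i)}}^\theta\}$ and match coefficients via the Pochhammer telescoping identity $c(\lambda,N,\theta\,;\alpha)/c(\lambda_{(i)},N,\theta\,;\alpha)=\bigl((N+\alpha+1-i)\theta+\lambda_i-1\bigr)/\bigl((N+\alpha+2-i)\theta+\lambda_i-1\bigr)$. The only difference is cosmetic: you spell out the ``direct computation'' the paper leaves implicit.
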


Let $A_{\theta, \alpha }^{N} $ be the generator associated with the stochastic differential equations \eqref{:11d}.
Specifically, 
\begin{align}\label{:32a} %\label{:15c}
  A_{\theta, \alpha }^{N} &=  D_1^N + \theta (\alpha +1 ) E_0^N 
.\end{align}

\begin{lemma}\label{l:33}
  Assume that  $\theta >0$ and $\alpha>-1$.
Then, for any partition $\lambda $ with $l(\lambda ) \le N$, the following holds for any $\xx_N\in \nWN $:
\begin{align} \label{:33a}
  [A_{\theta, \alpha +1}^{N} \LtaNN P_{\lambda}^\theta ] (\xx_{N})&:= [ \LtaNN A_{\theta, \alpha}^N P_{\lambda}^\theta ] (\xx_{N})
.\end{align}
\begin{proof}
The equation \eqref{:33a} results from \eqref{:31b} and \eqref{:41a} with \eqref{:32a}.
\end{proof}
\end{lemma}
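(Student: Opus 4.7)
The plan is to observe that Lemma \ref{l:33} is really just a repackaging of equation \eqref{:41a} from Lemma \ref{l:32}, once the generator $A_{\theta, \alpha}^N$ is written out explicitly. So the whole argument is a one-line substitution using the definition \eqref{:32a}.

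Concretely, by \eqref{:32a} we have
\begin{align*}
A_{\theta, \alpha+1}^{N} &= D_1^N + \theta(\alpha+2) E_0^N,
\\
A_{\theta, \alpha}^{N} &= D_1^N + \theta(\alpha+1) E_0^N.
\end{align*}
The operator appearing on the left-hand side of \eqref{:41a} is exactly $A_{\theta, \alpha+1}^N$, and the operator appearing on the right-hand side of \eqref{:41a} is exactly $A_{\theta, \alpha}^N$. Substituting these identifications into \eqref{:41a} yields
\begin{align*}
[A_{\theta, \alpha+1}^{N} \LtaNN P_{\lambda}^\theta ](\xx_N)
= [\LtaNN A_{\theta, \alpha}^{N} P_{\lambda}^\theta ](\xx_N),
\end{align*}
which is precisely \eqref{:33a}.

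Since $A_{\theta, \alpha}^N$ contains no $E_1^N$ term, equation \eqref{:31b} is in fact not needed for \eqref{:33a}; all the content sits in \eqref{:41a}. The only conceptual point worth noting is the parameter shift: the coefficient of $E_0^N$ on the left is $\theta(\alpha+2)$, i.e.\ that of $A_{\theta, \alpha+1}^N$, while on the right it is $\theta(\alpha+1)$, matching $A_{\theta, \alpha}^N$. This shift by one in $\alpha$ is exactly what the kernel $\LtaNN$ is designed to absorb, and it is what produces the fixed-parameter (rather than shifted) intertwining \eqref{:17b} after exponentiation. Thus there is no genuine obstacle at this stage; the real work was already carried out in proving \eqref{:41a}, which in turn rested on the eigenfunction identity \tref{t:22} together with the identity
\begin{align*}
c(\lambda, N, \theta \,;\alpha)\bigl\{(N-i+\alpha+2)\theta + \lambda_i - 1\bigr\}
= c(\lambda_{(i)}, N, \theta \,;\alpha)\bigl\{(N-i+\alpha+1)\theta + \lambda_i - 1\bigr\}
\end{align*}
that matches the coefficients of $P_{\lambda_{(i)}}^\theta$ on the two sides.
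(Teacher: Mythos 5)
Your proof is correct and takes essentially the same route as the paper: \eqref{:33a} is just \eqref{:41a} rewritten via the definition \eqref{:32a}. You are also right that \eqref{:31b} is redundant here because $A_{\theta,\alpha}^N$ contains no $E_1^N$ term (the paper cites it anyway; it only becomes necessary for the Ornstein--Uhlenbeck generator $\mathfrak{A}_{\theta,\alpha}^N$ in Appendix \ref{appendix:B}).
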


We next show that the intertwining relation for generators \lref{l:33} gives rise to that for semigroups \tref{t:17}.
The following argument is almost identical to that of \cite{Ass19,RaS18}; however, for the conveniense of the reader, a concise proof will be provided.

We write $\XX^N $ as the solution to \eqref{:11d}.
An exponential moment estimate of $\XX^N $ follows from Step 2 in \cite[pp.1888--1889]{Ass19}.
Thus, for some $\ep >0$, we have $E_{\xx_N} [e^{\ep ||\XX_t^N ||}] <\infty$ for any $t\ge0$, where $||\cdot || $ denotes the $l_1$-norm on $\R^N$.

For a fixed partition $\lambda $, define $L( \{P_{\kappa}^\theta \}_{\kappa \subseteq \lambda } )$ as the finite dimensional vector space spanned by $\{P_{\kappa}^\theta \}_{\kappa \subseteq \lambda }$.
The kernel $\LtaNN $ can be regarded as a operator on $L( \{P_{\kappa}^\theta \}_{\kappa \subseteq \lambda } )$ by \tref{t:22}.
Thus, $\LtaNN $ acts on the space as a matrix, denoted by  $M_1:=[M_1( \kappa , \nu )]_{ \kappa ,\nu \subseteq \lambda }$ .
Then, by the definition of $M_1$, we have 
\begin{align}\label{:51b}
\LtaNN P_{\kappa} ^\theta (\xx_N )  =\sum_{\nu \subseteq  \lambda } M_1(\kappa ,\nu ) P_{\nu}^\theta (\xx_N )  
\end{align}
for any $\kappa \subseteq \lambda $ and $\xx_N \in\nWN $.
Similarly, from \lref{l:21}, the kernel $\LtNNp $ acts on $L( \{P_{\kappa}^\theta \}_{\kappa \subseteq \lambda } )$ as a matrix, denoted by $M_1'$. 

\begin{lemma}\label{l:51}
Assume that $\theta \ge 1$ and $\alpha>-1$.
Then, for any partition $\lambda $ with $l(\lambda) \le N $, the following holds for any $ \xx_N \in W_{\ge}^{N}$:
\begin{align}\label{:41g}
   [ T_{\theta, \alpha+1 ,t}^{N } \LtaNN P_{\lambda  }^{\theta } ] (\xx_N )= [ \LtaNN T_{\theta, \alpha, t}^N P_{\lambda }^\theta]  (\xx_N ) 
   .\end{align}
\begin{proof}
Because of \eqref{:32a} with \eqref{:40a} and \eqref{:40b}, the generator $A_{\theta, \alpha }^{N}$ can be regarded as a operator on  $L( \{P_{\kappa}^\theta \}_{\kappa \subseteq \lambda } )$.
We define $M_2 :=[M_2(\kappa ,\nu )]_{\kappa ,\nu \subseteq \lambda }$ as the matrix describing an action of $A_{\theta, \alpha }^{N} $ on this vector space.
That is, the matrix $M_2 $ is determined by 
\begin{align}\label{:52a}
A_{\theta, \alpha }^{N} P_\kappa ^\theta =\sum_{\nu \subseteq \lambda } M_2(\kappa ,\nu ) P_\nu^\theta   
\end{align}
for any $ \kappa \subseteq \lambda$.
Similarly, let $M_3$ be the matrix describing an action of $A_{\theta, \alpha +1 }^{N} $ on  $L( \{P_{\kappa}^\theta \}_{\kappa \subseteq \lambda } )$.
Then, we derive $ M_1 M_3 =M_2M_1 $ from \eqref{:33a}.
Therefore, for any $t\ge 0$, we have
  \begin{align}\label{:41f}
    M_1 e^{t M_3}  = e^{tM_2}  M_1
. \end{align}

By the It\^o formula to compute $P_{\lambda }^\theta (\XX^N  ) $ and taking expectation, we have
\begin{align}\label{:41d}
  T_{\theta, \alpha, t}^{N} P_{\lambda }^\theta (\xx_N ) = P_{\lambda } ^\theta (\xx_N ) + \int_0^t T_{\theta, \alpha, s}^{N} A_{\theta, \alpha }^{N} P_{\lambda } ^\theta (\xx_N ) ds
\end{align}  
for any $\xx_N \in W_{\ge}^{N} $.
Fix $\xx_N \in W_{\ge}^{N} $ and set $f_{\kappa }(t) :=T_{\theta, \alpha, t }^{N} P_{\kappa}^\theta (\xx_N ) $.
Because \eqref{:41d} holds for any $\kappa $ with $\kappa \subseteq \lambda$, we obtain 
\begin{align*}
  f_{\kappa } (t) =f_{\kappa}(0) + \sum_{\nu \subseteq \lambda }M_2(\kappa, \nu )\int_0^t f_{\nu}(s) ds
\end{align*}
from \eqref{:52a}, which has a unique solution 
\begin{align}\label{:41e}
  f_{\kappa }(t)=\sum_{\nu \subseteq \lambda } e^{t M_2}(\kappa, \nu) f_{\nu} (0)
.\end{align}
From \eqref{:41f} with \eqref{:41e} and its analogue for $M_3$ instead of $M_2$, we obtain \eqref{:41g}.
\end{proof}  
\end{lemma}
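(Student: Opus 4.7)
The plan is to reduce the semigroup intertwining to a finite-dimensional linear-algebraic identity, leveraging the generator intertwining \lref{l:33} together with an It\^o-formula argument that identifies the semigroup with a matrix exponential.

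First, I would verify that the finite-dimensional space $L(\{P_\kappa^\theta\}_{\kappa \subseteq \lambda})$ is invariant under each of $\LtaNN$, $A_{\theta,\alpha}^N$, and $A_{\theta,\alpha+1}^N$. For $\LtaNN$ this is immediate from \tref{t:22} (in fact $\LtaNN$ acts diagonally). For the generators, the formulas \eqref{:40a}--\eqref{:40b} show that both $E_0^N$ and $D_1^N$ send $P_\kappa^\theta$ to a linear combination of $\{P_{\kappa_{(i)}}^\theta\}$, so $A_{\theta,\alpha}^N = D_1^N + \theta(\alpha+1)E_0^N$ preserves this subspace. Writing $M_1, M_2, M_3$ for the matrices representing $\LtaNN$, $A_{\theta,\alpha}^N$, $A_{\theta,\alpha+1}^N$ on this space, \lref{l:33} read on each basis element becomes $M_1 M_3 = M_2 M_1$. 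Exponentiating in this finite-dimensional algebra then yields
\begin{align*}
M_1 e^{tM_3} = e^{tM_2} M_1 \quad \text{for all } t \ge 0.
\end{align*}

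Second, I would identify $e^{tM_2}$ with the action of $T_{\theta,\alpha,t}^N$ on the subspace. For each $\kappa \subseteq \lambda$, It\^o's formula applied to $P_\kappa^\theta(\XX_t^N)$ gives
\begin{align*}
P_\kappa^\theta(\XX_t^N) = P_\kappa^\theta(\xx_N) + \int_0^t (A_{\theta,\alpha}^N P_\kappa^\theta)(\XX_s^N)\,ds + \text{(local martingale)}.
\end{align*}
Using the exponential moment bound $E_{\xx_N}[e^{\ep \|\XX_t^N\|}] < \infty$ from \cite{Ass19} referenced just before the lemma, together with the polynomial growth of $P_\kappa^\theta$, the local martingale is a true martingale on bounded time intervals. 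Taking expectations and observing that $A_{\theta,\alpha}^N P_\kappa^\theta$ lies in $L(\{P_\nu^\theta\}_{\nu \subseteq \lambda})$, the vector $(f_\kappa(t))_{\kappa \subseteq \lambda}$ defined by $f_\kappa(t) := T_{\theta,\alpha,t}^N P_\kappa^\theta(\xx_N)$ satisfies a closed linear Volterra equation with matrix $M_2$, whose unique continuous solution is $f(t) = e^{tM_2} f(0)$. Repeating the argument with $\alpha+1$ in place of $\alpha$ gives the analogous identification of $T_{\theta,\alpha+1,t}^N$ on the subspace with $e^{tM_3}$.

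Third, stitching the two pieces together, the matrix identity $M_1 e^{tM_3} = e^{tM_2} M_1$ applied to the column indexed by $\lambda$ and evaluated at arbitrary $\xx_N \in \nWN$ produces exactly \eqref{:41g}. The main obstacle I anticipate is the integrability/invariance step in the second part: one must confirm that the stochastic integral from It\^o's formula is a genuine (not merely local) martingale and that $T_{\theta,\alpha,t}^N$ preserves the finite-dimensional subspace, so that the resulting system is truly closed and the linear-ODE uniqueness argument applies. Once the exponential moment estimate is invoked, both points follow; the rest of the proof is routine finite-dimensional linear algebra.
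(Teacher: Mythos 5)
Your proposal follows essentially the same route as the paper: invariance of the finite-dimensional span $L(\{P_\kappa^\theta\}_{\kappa\subseteq\lambda})$ under $\LtaNN$ and both generators, the matrix identity $M_1M_3=M_2M_1$ from \lref{l:33} and its exponentiated form, the It\^o/Volterra argument identifying the semigroup with $e^{tM_2}$ on that span, and the final assembly. The only difference is cosmetic --- you spell out the local-martingale-to-true-martingale step via the exponential moment bound, which the paper leaves implicit --- so this is the same proof.
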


A probability measure $m$ on $\nWN $ gives rise to a symmetrised probability measure $m^{\sym }$ on $[0,\infty)^N $ via 
\begin{align*}
  m^{\sym } (dz_1,\ldots , dz_{N})= \frac{1}{N!} m (dz_{(1)},\ldots ,dz_{(N)})
,\end{align*}
where $z_{(1)}\le z_{(2)} \le \cdots \le z_{(N)}$ are the ordered statistics of $(z_1, z_2,\ldots, z_N)$.
Furthermore, for a (not necessarily symmetric) function $f$ on $ [0,\infty)^N $, we have
\begin{align}\label{:42b}
  \int_{[0,\infty)^N } f(\zz ) m^\sym( d\zz) =\int_{[0,\infty)^N} f^\sym (\zz ) m^\sym(d \zz) =  \int_{\nWN } f^\sym (\zz ) m(d \zz)
,\end{align}
where $f^\sym (\zz )$  is a symmetric function given by $f^\sym (\zz )=\frac{1}{N!} \sum_{\sigma\in S_{N}}f(z_{\sigma(1) },\ldots, z_{\sigma(N) })$.

\bigskip 
\noindent
{\em Proof of \tref{t:17}} \quad 
Fix $\xx \in W_{\ge}^N $. 
Recall that Jack polynomials form a basis for the space of symmetric polynomials.
Therefore, from \eqref{:41g}, we obtain
\begin{align}\label{:42c}
  [ T_{\theta, \alpha+1 ,t}^{N } \LtaNN p ] (\xx_N )= [ \LtaNN T_{\theta, \alpha, t}^N p ]  (\xx_N ) 
\end{align}
for any symmetric polynomial $p$ in $N$-variables.
Therefore, from \eqref{:42b} and \eqref{:42c}, all moments of the symmetrised measures of $ [ T_{\theta, \alpha+1 ,t}^{N } \LtaNN  ] (\xx_N , \cdot ) $ and $[ \LtaNN T_{\theta, \alpha, t}^N ]  (\xx_N , \cdot )  $ are the same.
From \cite[Theorem 1.3]{deJ03} with the fact that $[\LtaNN e^{\ep ||\cdot || }](\zz) \le e^{\ep || z || } $ and the exponential moments estimate of $\XX_N$, we see that the symmetrised versions of $ [ T_{\theta, \alpha+1 ,t}^{N } \LtaNN  ] (\xx_N , \cdot ) $ and $[ \LtaNN T_{\theta, \alpha, t}^N p ]  (\xx_N , \cdot )  $ are the same probability measure on $[0,\infty)^N$.
Thus, these measures coincide on $\nWN $, and we conclude the theorem.
\qed
\ms

\section{Interpretation of Markov kernels for classical $\theta $ in terms of the radial parts of random matrices} \label{s:6}
In this section, we focus on the classical parameter $\theta=1/2,1,2$.
Furthermore, we assume that $\alpha $ is a non-negative integer.
For these specific values of $\theta $ and $ \alpha$, the kernel $\LtaNNp $ has interpretation in the context of random matrix theory, as described in \tref{t:12}.
This interpretation was originally established for $\theta=1$ in \cite{BuK}, and the proof for $\theta =1/2, 2$ follows by a nearly identical argument.
For reader's convenience, we will briefly outline the proof.

\subsection{Conditional eigenvalues of invariant random matrices}
Let $\F $ denote $\R$, $\C$, or the skew field of quaternions $\HH $, which corresponds to $\theta=1/2,1,2$, respectively. 
Let $M_{m,n}(\F )$ be the space of $m\times n$ matrices over $\F $, and for brevity write $M_{n}(\F )=M_{n,n}(\F )$.
Define the subset $H_{n}(\F ) \subset M_{n}(\F ) $ as the space of real symmetric matrices for $\F=\R$, complex Hermite matrices for $\F= \C$, or quaternion self-dual matrices for $\F=\HH $.
Furthermore, define $\mathbb{U}_{n }(\F ) \subset M_{n}(\F ) $ as the space of orthogonal matrices for $\F=\R$, unitary matrices for $\F=\C$, or symplectic matrices for $\F=\HH $.
% More precisely, 
% \begin{align*}
% \mathbb{U}_{n }(\F ) 
% =
% \begin{cases}
% \text{the orthogonal group or order $n$ for $\F=\R$}
% ,\\
% \text{the unitary group of order $n$  for $\F=\U$} ,
% \\
% \text{the $n\ts n$ orthogonal group for $\F=\R$,}
% \end{cases}
% \end{align*}

For $m_1\ge m_2, n_1\ge n_2 $, let $\pi_{m_2, n_2}^{m_1, n_1} : M_{m_1, n_1}  (\F ) \to M_{m_2,n_2} (\F ) $ be the natural projection sending an $m_1 \times n_1 $ matrix to its upper left $m_2 \times n_2$ corner.
We employ the expression $\pi_{m_2,n_2}^{m_1}$ in place of $\pi_{m_2,n_2}^{m_1,n_1}$ if $m_1=n_1$, and use a similar symbol for $m_2=n_2$.

We define a map $\mathfrak{eval}_{n} : H_{n}(\F ) \to W^n  $ as 
\begin{align*}
\mathfrak{eval}_{n} (X)=(\lambda_{1}(X), \ldots , \lambda_{n}(X))
,\end{align*}
where $(\lambda_{i}(X))_{i=1}^n$ is the eigenvalues of $X$ arranged in non-decreasing order.
Furthermore, define the radial part $\mathfrak{rad}_{n} : M_{m,n} (\F ) \to W_{\ge }^n $ as $\mathfrak{rad}_{n} (X)=\mathfrak{eval} _n(X^{*}X)$.
Let a probability measure $P_{\mathfrak{eval}}^n [X ] $ on $W^n $ be the distribution of the eigenvalues of a random matrix $X \in H_{n}(\F ) $.
Similarly, let a probability measure $P_{\mathfrak{rad}}^n [X]  $ on $W_{\ge}^n$ denote the distribution of the radial part of a random matrix $X\in M_{m,n}(\F ) $.

Let $U_{N+1}\in \mathbb{U}_{N+1}$ be a Haar distributed random matrix.
For $\xx \in W^{N+1}$, let $\mathrm{diag}(x_1,\ldots, x_{N+1}) $ denote  the square matrix of order $N+1$ with deterministic diagonal elements given by $\xx $.
Then, the equality 
\begin{align}\label{:51e}
  \LtNNp (\xx, \cdot ) =P_{\eval }^N [ \pi_N^{N+1} (U_{N+1}^* \diag(x_1,\ldots, x_{N+1}) U^{N+1} ) ] 
\end{align}
holds for any  $\xx \in W^{N+1}$.
The equation \eqref{:51e} was proved for $\theta= 1 $ in \cite[Proposition 4.2]{Bar01}, and extended to $\theta =1/2,2$ in \cite{Ner03} (see also \cite[Proposition 1.7]{AsN21}).
As an immediate consequence of \eqref{:51e}, we obtain the following lemma:
\begin{lemma}
If a random matrix $X_{N+1}\in H_{N+1}(\F )$ is $\mathbb{U}_{N+1}(\F ) $-invariant by conjugation in the sense that $U_{N+1}^{*} X_{N+1} U_{N+1} \stackrel{\mathrm{law}}{=} X_{N+1}$ for any $U_{N+1} \in \mathbb{U}_{ N+1}( \F ) $, then we have the equality of probability measures
\begin{align}\label{:51c}
P_{\mathfrak{eval}}^{N+1}[X_{N+1}] \LtNNp=P_{\mathfrak{eval}}^{N}[ \pi_{N}^{N+1}( X_{N+1}) ]
.\end{align}
\end{lemma}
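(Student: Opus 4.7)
The plan is to condition on the spectrum of $X_{N+1}$ and reduce to the deterministic-spectrum formula \eqref{:51e}. First I would invoke the spectral theorem to write $X_{N+1} = V^{*} \diag(\lambda_{1}(X_{N+1}),\ldots,\lambda_{N+1}(X_{N+1})) V$ for some (random, measurable) $V \in \mathbb{U}_{N+1}(\F)$. Let $U_{N+1}$ be a Haar-distributed element of $\mathbb{U}_{N+1}(\F)$ chosen independently of $X_{N+1}$. The $\mathbb{U}_{N+1}(\F)$-invariance of $X_{N+1}$ by conjugation, together with the right-invariance of Haar measure on $\mathbb{U}_{N+1}(\F)$, gives
\begin{equation*}
X_{N+1} \stackrel{\mathrm{law}}{=} U_{N+1}^{*} X_{N+1} U_{N+1} = (V U_{N+1})^{*} \diag(\lambda_{1}(X_{N+1}),\ldots,\lambda_{N+1}(X_{N+1})) (V U_{N+1}),
\end{equation*}
so that conditional on the eigenvalues $\xx = \eval_{N+1}(X_{N+1})$ the matrix $X_{N+1}$ has the same law as $\tilde{U}^{*} \diag(\xx) \tilde{U}$ with $\tilde{U}$ Haar-distributed independently of $\xx$.

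Next I would apply the projection $\pi_{N}^{N+1}$ and extract eigenvalues. For each fixed $\xx \in W^{N+1}$, the conditional distribution of $\eval_{N}(\pi_{N}^{N+1}(X_{N+1}))$ given $\eval_{N+1}(X_{N+1}) = \xx$ coincides with
\begin{equation*}
P_{\eval}^{N}\bigl[\pi_{N}^{N+1}(\tilde{U}^{*} \diag(x_{1},\ldots,x_{N+1}) \tilde{U})\bigr] = \LtNNp(\xx, \cdot),
\end{equation*}
where the last equality is exactly \eqref{:51e}. Integrating this conditional law against the distribution $P_{\eval}^{N+1}[X_{N+1}]$ of $\xx$, the tower property gives
\begin{equation*}
P_{\eval}^{N}[\pi_{N}^{N+1}(X_{N+1})](d\yy) = \int_{W^{N+1}} \LtNNp(\xx, d\yy)\, P_{\eval}^{N+1}[X_{N+1}](d\xx),
\end{equation*}
which is precisely the asserted identity \eqref{:51c}.

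I do not anticipate a serious obstacle here: the statement is essentially a fibrewise application of \eqref{:51e}, and the only subtlety is arranging the invariance/Haar-independence argument so that the spectral decomposition factor $V$ gets absorbed. The measurability of the map $X_{N+1} \mapsto V$ is standard once the eigenvalues are ordered (outside a measure-zero set of coincidences, which can be handled by continuity since both sides depend weakly continuously on $\xx$, as already noted in the proof of \tref{t:22}).
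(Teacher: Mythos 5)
Your proof is correct and fleshes out exactly the argument the paper leaves implicit (the paper simply states the lemma as ``an immediate consequence of \eqref{:51e}'' without giving a proof). The conditioning on the spectrum, absorbing the spectral decomposition factor into an independent Haar unitary via bi-invariance of Haar measure on the compact group $\mathbb{U}_{N+1}(\F)$, applying \eqref{:51e} fibrewise, and integrating is precisely what is meant by that remark.
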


\subsection{Interpretation of $\LtaNNp $ }
In this subsection, $\alpha $ is supposed to be a non-negative integer.
We begin by explaining the following interpretation of $\LtaNN $ in the context of random matrix theory.
This interpretation is established for $\theta =1 $ in \cite{KKS16}, and extended to $\theta =1/2,2$ in \cite{AhS22}.
%Recall that $P_{\mathfrak{rad}}^n [X_{m,n}]  $ denotes the distribution of the radial part of a random matrix $M_{m,n}\in M_{m,n}(\F ) $.
\begin{lemma} \label{l:15}
Assume that $\alpha$ is a non-negative integer.
Let $V_{N+\alpha +1} \in \mathbb{U}_{N+\alpha+1}(\F )$ be a Haar distributed random  matrix.
Then, for any $\zz= (z_1 ,\ldots, z_N) \in \nWN $, we have
\begin{align}\label{:53b}
\LtaNN (\zz ,\cdot)= P_{\mathfrak{rad}}^{N} [ \pi_{N+\alpha, N}^{N+\alpha+1}( V_{N+\alpha+1} ) \mathrm{diag}(\sqrt{z_1},\ldots, \sqrt{z_{N}}) ] 
.\end{align}

\begin{proof}
It is sufficient to show \eqref{:53b} for $\zz\in\intnWN$.
Applying \cite[Theorem 2.7]{AhS22} with the setting $(m,l, n, \nu)=(N+\alpha+1, N, N,\alpha)$ yields that, for $\zz\in \intnWN $ satisfying $0<z_1 < \cdots <z_N <1$, the probability density of 
\begin{align*}
P_{\mathfrak{rad}}^{N} [\pi_{N+\alpha, N}^{N+\alpha+1}( V_{N+\alpha+1} )  \mathrm{diag}(\sqrt{z_1},\ldots, \sqrt{z_{N}})  ]
\end{align*}
is given by \eqref{:11a}, and this can be easily extended to $\zz\in \intnWN $.
Thus, we obtain \eqref{:53b} for $\zz\in\intnWN $, which completes the proof. 
%If $\zz \in \nWN $, [].
\end{proof}
\end{lemma}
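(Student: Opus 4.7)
The plan is to compare the two probability measures appearing on each side of \eqref{:53b} by matching their explicit densities: on the left using the Dixon--Anderson-type formula \eqref{:11b} from \pref{p:15}, and on the right using a known random matrix computation for truncations of Haar-distributed matrices in $\mathbb{U}_{N+\alpha+1}(\F)$.

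First I would reduce to the interior $\intnWN$. Both sides of \eqref{:53b} depend weakly continuously on $\zz \in \nWN$: on the left by the continuity of polynomial roots in their coefficients (as already used in the proof of \tref{t:22}), and on the right because $\rad_N$, the projection $\pi_{N+\alpha, N}^{N+\alpha+1}$, and matrix multiplication by $\diag(\sqrt{z_1},\ldots,\sqrt{z_N})$ are each continuous in $\zz$ while the Haar distribution of $V_{N+\alpha+1}$ is independent of $\zz$. Since $\intnWN$ is dense in $\nWN$, it suffices to establish the identity on $\intnWN$. For $\zz$ in this set I would invoke \cite[Theorem 2.7]{AhS22} with the specialisation $(m, l, n, \nu) = (N+\alpha+1, N, N, \alpha)$, which produces an explicit density for the squared singular values of $\pi_{N+\alpha, N}^{N+\alpha+1}(V_{N+\alpha+1}) \diag(\sqrt{z_1}, \ldots, \sqrt{z_N})$; direct inspection should show that this density coincides with \eqref{:11b}, including the normalisation $1/Z_{\theta, \alpha}^N$, which matches the Selberg-type constant appearing in truncated Haar ensembles.

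The main obstacle is a matter of scope rather than substance: \cite[Theorem 2.7]{AhS22} is naturally formulated in the regime $0 < z_1 < \cdots < z_N < 1$, whereas the lemma requires the identity on all of $\intnWN$. This gap is closed by a homogeneity argument: replacing $\zz$ by $c\zz$ with $c > 0$ rescales the squared singular values on the right-hand side of \eqref{:53b} by a factor of $c$, and a short computation combining the Jacobian $c^N$ with the scalings of the Vandermonde factors, the weights $|x_i-y_j|^{\theta-1}$, and the boundary factor $\prod_i y_i^{\theta(\alpha+1)-1}/x_i^{\theta(\alpha+2)-1}$ shows that the density \eqref{:11b} transforms in a compatible way (in fact the total exponent of $c$ cancels, consistent with the homogeneity of the Selberg normalisation $Z_{\theta, \alpha}^N$). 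Combined with the earlier continuity reduction, this propagates the identity from the unit cube to all of $\nWN$, completing the proof.
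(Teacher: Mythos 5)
Your proposal follows the paper's proof exactly — reduce to $\intnWN$ by weak continuity, then invoke \cite[Theorem 2.7]{AhS22} with $(m,l,n,\nu)=(N+\alpha+1,N,N,\alpha)$ to match densities — and the homogeneity argument you supply to pass from the unit cube $0<z_1<\cdots<z_N<1$ to all of $\intnWN$ is a correct and welcome filling-in of the step the paper dismisses as ``easily extended'' (one can check the net exponent of $c$ in \eqref{:11b}, including the Jacobian $c^N$, is indeed zero). One small remark: the density produced by the cited theorem should match \eqref{:11b} (the density of $\LtaNN$), not \eqref{:11a} as the paper's proof states — you have this right.
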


%The next theorem reveals that the Markov kernel $\LaNNp $ has a similar expression, which gives a relation between the radial parts of a unitary invariant rectangular matrix and  its truncation.
We say a random matrix $X_{m,n}\in M_{m,n}(\F )$ is $\mathbb{U}_{m}(\F ) \times \mathbb{U}_{n}(\F ) $-invariant if $X_{m,n}\stackrel{\mathrm{law}}{=} V_{m} X_{m,n} U_{n}$ for any fixed matrices $V_{m}\in \mathbb{U}_{m} (\F ), U_{n}\in \mathbb{U}_{n} (\F )$.
The following theorem was proved for $\theta =1$ in \cite{BuK}.

\begin{theorem}\label{t:12}
  Let $\alpha$ be a non-negative integer.
  For an $\mathbb{U}_{N+\alpha+1}(\F ) \times \mathbb{U}_{ N+1}(\F )$-invariant random matrix $X_{N+\alpha+1,N+1} \in M_{N+\alpha+1,N+1}(\F )$, let $X_{N+\alpha, N}:=\pi_{N+\alpha,N}^{N+\alpha+1, N+1}( X_{N+\alpha+1,N+1})$ be its truncation.
  Then, we have the equality of probability measures
  \begin{align*}
  P_{\mathfrak{rad}}^{N+1} [ X_{N+\alpha+1,N+1} ] \LtaNNp =P_{\mathfrak{rad}}^{N} [X_{N+\alpha, N}]
  .\end{align*}
 \begin{proof}
Setting %$M_{N+\alpha+1,N+1}:=V_{N+\alpha+1}  D_{N+\alpha+1, N+1}  U_{N+1} $ and 
$X_{N+\alpha+1,N} :=\pi_{N+\alpha+1, N}^{N+\alpha+1, N+1} (X_{N+\alpha+1,N+1} )$, we have $$\pi_{N}^{N+1}( X_{N+\alpha+1,N+1}^{*} X_{N+\alpha+1, N+1} )=X_{N+\alpha+1,N}^{*} X_{N+\alpha+1,N}.$$
Furthermore, because $ X_{N+\alpha+1,N+1}^{*} X_{N+\alpha+1, N+1} \in H_{N+1}(\mathbb{F })$ is $\mathbb{U}_{N+1}(\F) $-invariant by conjugation, the equation \eqref{:51c} yields
\begin{align}\label{:12c}
P_{\mathfrak{rad}}^{N+1} [ X_{N+\alpha+1,N+1} ]  \LtNNp =  P_{\mathfrak{rad}}^{N} [ X_{N+\alpha+1,N} ] 
.\end{align}

For a random variable $(z_1,\ldots, z_N)$ distributed as $P_{\mathfrak{rad}} ^N[ X_{N+\alpha+1,N}] $, we set 
\begin{align*}
D_{N+\alpha+1,N}:=
\begin{bmatrix}
D_{N}  \\
\mathbf{0}_{(\alpha+1 )\times N}
\end{bmatrix}
, \quad  D_{N}:=\mathrm{diag}(\sqrt{z_1},\ldots,\sqrt{z_N})
.\end{align*}
Let $U_{N}\in \mathbb{U}_{N} (\F )$ and $V_{N+\alpha+1}\in \mathbb{U} _{N+\alpha+1}(\F )$ be Haar distributed random matrices such that $D_{N+\alpha+1,N}, U_{N}$, and $ V_{N+\alpha+1}$ are independent.
Then, for a similar reason as in \cite[Lemma 2.4]{Def10}, we obtain
\begin{align}\label{:12e}
X_{N+\alpha+1, N} \stackrel{\mathrm{law}}{=} V_{N+\alpha+1} D_{N+\alpha+1,N} U_N
.\end{align}
%Actually, we can write $X_{N+\alpha+1, N} =V D_{N+\alpha+1, N} U$ with $U \in \mathbb{U}_{N}(\F )$ and $V \in \mathbb{U}_{N+\alpha +1}(\F )$.
%Because $X_{N+\alpha+1,N}$ is $\mathbb{U} (N+\alpha+1 ) \times \mathbb{U} (N )$-invariant, $X_{N+\alpha+1, N}$ has the same distribution as $(\tilde{V} V) D_{N+\alpha+1, N} (U\tilde{U})$, where $\tilde{U} \in \mathbb{U}_{N}(\F )$ and $\tilde{V} \in \mathbb{U}_{N+\alpha +1}(\F )$ are Haar distributed random matrices independent ot $X_{N+\alpha+1,N}$.
%The Haar measure being invariant by multiplication, we obtain \eqref{:12e}.
A direct computation with \eqref{:12e} yields
\begin{align*}
X_{N+\alpha, N} =\pi_{N+\alpha, N}^{N+\alpha+1, N} (X_{N+\alpha+1, N}) \stackrel{\mathrm{law}}{=} \pi_{N+\alpha,N}^{N+\alpha+1} (V_{N+\alpha+1} ) D_{N} U_N
.\end{align*}
Therefore, using this with the equation \eqref{:53b},  we have
\begin{align}\label{:12d}
P_{\mathfrak{rad}}^{N} [X_{N+\alpha, N}]  = P_{\mathfrak{rad}}^{N } [\pi_{N+\alpha,N}^{N+\alpha+1} (V_{N+\alpha+1} ) D_{N} U_{N}]= P_{\mathfrak{rad}}^{N} [ X_{N+\alpha+1,N} ] \LtaNN
.\end{align}
Collecting \eqref{:12c} and \eqref{:12d} with \eqref{:11c}, we complete the proof of \pref{t:12}.
\end{proof}
\end{theorem}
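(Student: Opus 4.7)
The plan is to split the kernel $\LtaNNp$ via the factorisation \eqref{:11c} $\LtaNNp = \LtNNp \LtaNN$, and treat the two factors separately using the two already-known random matrix interpretations: the unitarily invariant conjugation interpretation \eqref{:51c} of $\LtNNp$ on the one hand, and the truncation-times-diagonal interpretation \lref{l:15} of $\LtaNN$ on the other. Concretely, I would aim to establish
\begin{align*}
P_{\rad}^{N+1}[X_{N+\alpha+1,N+1}] \,\LtNNp &= P_{\rad}^{N}[X_{N+\alpha+1,N}],\\
P_{\rad}^{N}[X_{N+\alpha+1,N}] \,\LtaNN &= P_{\rad}^{N}[X_{N+\alpha,N}],
\end{align*}
where $X_{N+\alpha+1,N}$ denotes the first $N$ columns of $X_{N+\alpha+1,N+1}$; composing these gives the theorem.

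For the first identity, I would observe that the Gram matrix $X_{N+\alpha+1,N+1}^{*} X_{N+\alpha+1,N+1} \in H_{N+1}(\F)$ is $\mathbb{U}_{N+1}(\F)$-invariant by conjugation, because any right-action by $U_{N+1}\in \mathbb{U}_{N+1}(\F)$ on $X_{N+\alpha+1,N+1}$ induces precisely such a conjugation. Moreover, deleting the last column of $X_{N+\alpha+1,N+1}$ corresponds exactly to taking the upper-left $N\times N$ corner of this Gram matrix. Thus \eqref{:51c} applies directly and yields the first identity.

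For the second identity I would use that two-sided invariance of $X_{N+\alpha+1,N+1}$ under $\mathbb{U}_{N+\alpha+1}(\F)\times \mathbb{U}_{N+1}(\F)$ passes to its $N$-column truncation $X_{N+\alpha+1,N}$ as invariance under $\mathbb{U}_{N+\alpha+1}(\F)\times \mathbb{U}_{N}(\F)$. The key technical ingredient is an SVD-style decomposition in law
\begin{equation*}
X_{N+\alpha+1,N} \stackrel{\mathrm{law}}{=} V_{N+\alpha+1}\, D_{N+\alpha+1,N}\, U_N,
\end{equation*}
where $V_{N+\alpha+1}, U_N$ are Haar distributed, independent of the radial part $D_N$ (padded by zeros to a rectangular $D_{N+\alpha+1,N}$). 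Taking the further truncation of the rows to the first $N+\alpha$ rows converts $V_{N+\alpha+1} D_{N+\alpha+1,N}$ into $\pi_{N+\alpha,N}^{N+\alpha+1}(V_{N+\alpha+1})\, D_N$, after which \lref{l:15} identifies the law of the radial part of $\pi_{N+\alpha,N}^{N+\alpha+1}(V_{N+\alpha+1}) D_N U_N$ (which is the same as that of $\pi_{N+\alpha,N}^{N+\alpha+1}(V_{N+\alpha+1}) D_N$ by right $\mathbb{U}_N$-invariance of the radial part) as precisely $\LtaNN$ applied to $\zz=(z_1^2,\ldots, z_N^2)$. This yields the second identity.

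The main obstacle is the SVD-in-law step: ensuring that the diagonalising unitaries can be chosen independent of the singular values and Haar distributed. This rests on the two-sided invariance of $X_{N+\alpha+1,N}$ together with uniqueness of the SVD (up to the usual sign/phase ambiguity), and it is exactly the content of the cited \cite[Lemma 2.4]{Def10}. Once this decomposition is in hand, the rest of the argument is bookkeeping between the projection $\pi_{N+\alpha,N}^{N+\alpha+1,N+1}$, the factorisation \eqref{:11c}, and the two random matrix lemmas.
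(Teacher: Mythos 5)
Your proposal follows essentially the same route as the paper's proof: factor $\LtaNNp=\LtNNp\LtaNN$, handle $\LtNNp$ via the conjugation-invariance lemma \eqref{:51c} applied to the Gram matrix, and handle $\LtaNN$ via the SVD-in-law decomposition from \cite[Lemma 2.4]{Def10} together with \lref{l:15}. The intermediate object $X_{N+\alpha+1,N}$, the role of right $\mathbb{U}_N$-invariance of the radial part, and the final bookkeeping are all identical in substance to the paper.
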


\begin{corollary}\label{c:16}
Let $V_{N+\alpha+1}  \in \mathbb{U}_{N+\alpha+1}(\F ) $ and $U_{N+1}  \in \mathbb{U}_{N+1}(\F ) $ be Haar distributed independent random matrices and $D_{N+\alpha+1, N+1} \in M_{N+\alpha+1, N+1}(\F )$ be a deterministic matrix given by
  \begin{align*}
  D_{N+\alpha+1,N+1}=
  \begin{bmatrix}
    \mathrm{diag}(\sqrt{x_1},\ldots,\sqrt{x_{N+1}} )  \\
  \mathbf{0}_{\alpha \times (N+1)}
  \end{bmatrix}
  \text{ for }\xx =(x_1,\ldots, x_{N+1})\in \nWNp
  .\end{align*} 
  Then, the probability measure $\LtaNNp (\xx , \cdot) $ is the same as 
  \begin{align*}%\label{:26a}
  P_{\mathfrak{rad}}^{N} \big[ \pi _{N+ \alpha ,N}^{N+\alpha+1, N+1}(V_{N+\alpha+1}  D_{N+\alpha+1, N+1} U_{N+1}) \big]
  .\end{align*}
  \begin{proof}
 Because $V_{N+\alpha+1}  D_{N+\alpha+1, N+1}  U_{N+1}$ is an $\mathbb{U}_{N+\alpha+1}(\F ) \times \mathbb{U}_{ N+1}(\F)$-invariant random matrix and $P_{\rad }^{N+1} [V_{N+\alpha+1}  D_{N+\alpha+1, N+1}  U_{N+1}] =\delta_{\xx} $ holds, the statement of this corollary follows from \tref{t:12}.
\end{proof}
\end{corollary}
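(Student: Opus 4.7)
The plan is to invoke \tref{t:12} applied to the random matrix $X_{N+\alpha+1,N+1}:=V_{N+\alpha+1}\, D_{N+\alpha+1,N+1}\, U_{N+1}$. For this to go through I have to check two conditions: first, that $X_{N+\alpha+1,N+1}$ is $\mathbb{U}_{N+\alpha+1}(\F)\times \mathbb{U}_{N+1}(\F)$-invariant; and second, that its radial part is the deterministic vector $\xx$, so that $P_{\mathfrak{rad}}^{N+1}[X_{N+\alpha+1,N+1}]=\delta_{\xx}$.

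Checking the bi-invariance is a direct consequence of the left--right invariance of Haar measure combined with the independence of $V_{N+\alpha+1}$ and $U_{N+1}$. For any fixed $V\in \mathbb{U}_{N+\alpha+1}(\F)$ and $U\in \mathbb{U}_{N+1}(\F)$, the pair $(VV_{N+\alpha+1},\,U_{N+1}U)$ is again a pair of independent Haar distributed matrices, so $V X_{N+\alpha+1,N+1} U \stackrel{\mathrm{law}}{=} X_{N+\alpha+1,N+1}$.

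The concentration of the radial part follows by a direct calculation:
\begin{align*}
X_{N+\alpha+1,N+1}^{*}X_{N+\alpha+1,N+1}
&= U_{N+1}^{*} D_{N+\alpha+1,N+1}^{*}\, V_{N+\alpha+1}^{*} V_{N+\alpha+1}\, D_{N+\alpha+1,N+1}\, U_{N+1}\\
&= U_{N+1}^{*}\, \diag(x_1,\ldots,x_{N+1})\, U_{N+1},
\end{align*}
whose eigenvalues are exactly $x_1,\ldots,x_{N+1}$ regardless of $U_{N+1}$. Hence $\mathfrak{rad}_{N+1}(X_{N+\alpha+1,N+1})=\xx$ almost surely.

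With (i) and (ii) in hand, \tref{t:12} gives $P_{\mathfrak{rad}}^{N+1}[X_{N+\alpha+1,N+1}]\LtaNNp = P_{\mathfrak{rad}}^{N}[X_{N+\alpha,N}]$, where $X_{N+\alpha,N}:=\pi_{N+\alpha,N}^{N+\alpha+1,N+1}(X_{N+\alpha+1,N+1})$. The left-hand side is $\delta_{\xx}\LtaNNp = \LtaNNp(\xx,\cdot)$, and the right-hand side is the measure in the statement of the corollary. I do not anticipate any real obstacle here---the content lies entirely in \tref{t:12}, and the role of $V_{N+\alpha+1}$ and $U_{N+1}$ is just to realise, as the law of some concrete matrix, the delta mass $\delta_{\xx}$ as a bi-invariant law; the only thing requiring a little care is keeping the rectangular dimensions $N+\alpha+1$ and $N+1$ straight through the truncation $\pi_{N+\alpha,N}^{N+\alpha+1,N+1}$.
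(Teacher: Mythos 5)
Your proposal is correct and follows exactly the same route as the paper: verify that $V_{N+\alpha+1}D_{N+\alpha+1,N+1}U_{N+1}$ is bi-invariant and has deterministic radial part $\xx$, then apply Theorem~\ref{t:12}. The paper states these two facts without the explicit calculations you supply, so you have simply filled in details rather than taken a different approach.
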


Here we show an example obtained as a straightforward consequence of \tref{t:12}.
Let $X \in M_{N+\alpha +1, N+1} (\F ) $ be a random matrix whose probability density is proportional to 
\begin{align*}
 \exp \big( - \mathrm{Tr} X^* X \big) dX
.\end{align*}
The distribution of the squared singular values of $X$ is given by the Laguerre ensemble
\begin{align}
m_{\theta, \alpha }^N (d\xx ) := \frac{1}{\mathcal{Z}_{\theta, \alpha}^{N}}  \prod_{1\le i <j \le N} |x_j -x_i |^{2\theta} \prod_{i=1}^{N} x_i^{\theta (\alpha+1)-1} e^{-x_i}
,\end{align}
where $\mathcal{Z}_{\theta, \alpha}^N$ is the normalise constant.
Because $X$ is $\mathbb{U}_{m}(\F ) \times \mathbb{U}_{n}(\F ) $-invariant, we can use \tref{t:12}, and the resulting equation is 
\begin{align}
m_{\theta, \alpha}^{N+1} \LtaNNp =m_{\theta, \alpha}^{N} 
,\end{align}
for $\theta=1/2,1,2$ and $\alpha\in \{0\} \cup \N$.
\appendix

\section{Multivariate Laguerre polynomials and the integral operator $\LtaNN $}\label{appendix:A}

In this appendix, we investigate the action of the kernel $\LtaNN $ on symmetric functions other than Jack polynomials.
For any partition $\lambda $ with $l(\lambda ) \leq N$, we define multivariate Laguerre polynomial (or generalised Laguerre polynomial) $L_{\lambda }^{a}(\xx_{N} ;\theta ^{-1})$ by  
\begin{align} \label{:A1}
L_{\lambda }^{a}(\xx_{N} ;\theta ^{-1})
   :=
   \frac{1}{|\lambda |!}
   \sum_{\mu \subseteq \lambda }
      (-1)^{|\mu |}
      \binom{\lambda }{\mu }_{\theta }
      \prod_{i = 1}^{N}
         \frac{(a + 1 + \theta (N - i))_{\lambda_{i}}}{(a + 1 + \theta (N - i) )_{\mu_{i}}}
      \frac{P_{\mu }^{\theta }(\xx_N)}{P_{\mu }^{\theta}(1_N)}.
\end{align}
% where 
% $$
% (c)_{j}
%    :=
%    \frac{\Gamma (c + j)}{\Gamma (c)}
%    =
%    \begin{cases}
%    c(c + 1) \cdots (c + j - 1) & (j\not= 0) \\
%    1 & (j = 0)
%    \end{cases} \quad (j=0,1,2,...).
% $$
Our notation $L_{\lambda }^{a}(\xx_{N} ;\theta ^{-1})$ follows the Baker-Forrester style \cite{BaF97}. 
Thus, the parameter $\theta ^{-1}$ is not a typo. 
We also remark that the notation of the Jack polynomial in \cite{BaF97} is $C_{\lambda }^{\alpha }(\xx_{N})$ (this $\alpha $ is not our $\alpha$) and the correspondence between our notation and theirs is as follows:
$$
\frac{C_{\lambda }^{\theta ^{-1}}(\xx_{N})}{C_{\lambda }^{\theta ^{-1}}(1_{N})}
   =
   \frac{P_{\lambda }^{\theta }(\xx_{N})}{P_{\lambda }^{\theta }(1_{N})}.
$$

Originally, Laguerre polynomials were introduced as the polynomial eigenfunctions of the following Calogero-Surtherland type operator \cite{Las91}:
$$
\tilde{H}^{(L)}
   :=
   D_{1}^N + (a + 1)E_{0}^N - E_{1}^N.
$$
In fact, we have
\begin{align}\label{:A2}
\tilde{H}^{(L)}L_{\lambda }^{a}(\xx_{N} ;\theta ^{-1})
   =
   -|\lambda |L_{\lambda }^{a}(\xx_{N} ;\theta ^{-1}).
\end{align}

%To calculate an action of the exponential in \eqref{:A3}, we need the next corollary followed from \lref{l:32}.
We remark that $\tilde{H}^{(L)} +E_1^N $  appears in the intertwining relation \eqref{:41a}, where $a:=\theta (\alpha +1)-1$.
As a generalisation of the equation \eqref{:41a}, we have the following lemma.
\begin{lemma}\label{c:43}
Suppose $\theta > 0$ and $\alpha > -1$. 
Let $\lambda $ be a partition with $l(\lambda )\le N$.
Then, for any $k\in\N$ and $\xx_N\in \nWN $, we have
  \begin{align}\label{:43a}
  [\big( D_1^N + \theta(\alpha +2)E_0^N \big)^k \LtaNN P_{\lambda}^\theta ] ( \xx_N ) &= [ \LtaNN \big(  D_1^N + \theta(\alpha +1)E_0^N \big)^k P_{\lambda}^\theta]( \xx_N )
. \end{align}
\begin{proof}
The proof is by induction on $k$.
The equation \eqref{:43a} for $k=1$ is exactly \eqref{:41a}.
Write $C_{\alpha} := D_1^N + \theta(\alpha +1)E_0^N$.
%Then, for $k\ge 2$, the proof is completed by showing $(C_{\alpha +1 }^k \LtaNN -\LtaNN C_{\alpha }^k ) P_{\lambda}^\theta =0 $.
If we assume $(C_{\alpha +1 }^{k-1} \LtaNN -\LtaNN C_{\alpha }^{k-1} ) P_{\lambda}^\theta =0$ for some $k$, then we have
\begin{align*}&
(C_{\alpha +1 }^k \LtaNN -\LtaNN C_{\alpha }^k ) P_{\lambda}^\theta
\\& \notag
= C_{\alpha+1 }(C_{\alpha +1 }^{k-1} \LtaNN -\LtaNN C_{\alpha }^{k-1} ) P_{\lambda}^\theta + (C_{\alpha +1 }^{} \LtaNN -\LtaNN C_{\alpha }^{}  ) C_{\alpha}^{k-1} P_{\lambda}^\theta
\\& \notag
=0
.\end{align*}
Here, we use the fact that $C_{\alpha}^{k-1} P_{\lambda}^\theta$ is given by a linear combination of $P_{\mu } ^\theta $ with $l(\mu)\le N$ from \eqref{:40a} and \eqref{:40b}.
Thus, we establish the desired result.
\end{proof}
\end{lemma}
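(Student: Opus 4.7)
The plan is to prove the identity by induction on $k$, using \lref{l:32} (specifically equation \eqref{:41a}) as the base case $k=1$. Write $C_{\alpha}:=D_{1}^{N}+\theta(\alpha+1)E_{0}^{N}$, so that \eqref{:41a} reads $C_{\alpha+1}\LtaNN P_{\lambda}^{\theta}=\LtaNN C_{\alpha}P_{\lambda}^{\theta}$ on $\nWN$, and the claim becomes $C_{\alpha+1}^{k}\LtaNN P_{\lambda}^{\theta}=\LtaNN C_{\alpha}^{k}P_{\lambda}^{\theta}$ for all $k\in\N$.

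For the inductive step, assuming the identity holds for $k-1$, I would apply the standard telescoping decomposition
\[
C_{\alpha+1}^{k}\LtaNN-\LtaNN C_{\alpha}^{k}
=C_{\alpha+1}\bigl(C_{\alpha+1}^{k-1}\LtaNN-\LtaNN C_{\alpha}^{k-1}\bigr)
+\bigl(C_{\alpha+1}\LtaNN-\LtaNN C_{\alpha}\bigr)C_{\alpha}^{k-1}.
\]
The first summand annihilates $P_{\lambda}^{\theta}$ by the inductive hypothesis. To kill the second summand one applies the base case to each Jack polynomial appearing in $C_{\alpha}^{k-1}P_{\lambda}^{\theta}$, which requires verifying that this expression lies in the span of Jack polynomials $P_{\mu}^{\theta}$ with $l(\mu)\le N$ so that \lref{l:32} is applicable.

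The main obstacle, and the only place real work is needed, is this invariance: I would use formulas \eqref{:40a} and \eqref{:40b}, which express $E_{0}^{N}P_{\lambda}^{\theta}$ and $D_{1}^{N}P_{\lambda}^{\theta}$ as linear combinations of the Jack polynomials $P_{\lambda_{(i)}}^{\theta}$ indexed by partitions $\lambda_{(i)}\subsetneq\lambda$. Iterating, $C_{\alpha}^{k-1}P_{\lambda}^{\theta}$ is a finite linear combination of $P_{\mu}^{\theta}$ with $\mu\subseteq\lambda$, hence with $l(\mu)\le l(\lambda)\le N$, so the base case \eqref{:41a} applies term by term and closes the induction.
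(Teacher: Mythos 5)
Your proposal is correct and matches the paper's own proof: both argue by induction on $k$ with \eqref{:41a} as the base case, use the same telescoping decomposition $C_{\alpha+1}^{k}\LtaNN-\LtaNN C_{\alpha}^{k}=C_{\alpha+1}\bigl(C_{\alpha+1}^{k-1}\LtaNN-\LtaNN C_{\alpha}^{k-1}\bigr)+\bigl(C_{\alpha+1}\LtaNN-\LtaNN C_{\alpha}\bigr)C_{\alpha}^{k-1}$, and invoke \eqref{:40a}--\eqref{:40b} to show $C_{\alpha}^{k-1}P_{\lambda}^{\theta}$ lies in the span of $P_{\mu}^{\theta}$ with $l(\mu)\le N$. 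Your explicit remark that the resulting partitions satisfy $\mu\subseteq\lambda$ is a small but correct elaboration of the paper's "$l(\mu)\le N$" justification.
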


The multivariate Laguerre polynomial $L_{\lambda }^{a}(\xx_{N} ;\theta ^{-1})$ has the following Rodrigue type formula \cite[(4.39)]{BaF97} (Baker-Forrester said that ``prompted by M. Lassalle''):
\begin{align}\label{:A3}
\frac{(-1)^{|\lambda |}}{|\lambda |!}\exp{\big(-D_{1}^N - (a + 1)E_{0}^N \big)}\frac{P_{\lambda }^{\theta}(\xx_N)}{P_{\lambda }^{\theta }(1_N)}
 =
L^{a}_{\lambda }(\xx_{N};\theta ^{-1})
.\end{align}
From this Rodrigue type formula, \tref{t:22}, and \lref{c:43}, we calculate the action of $\LtaNN$ on $L_{\lambda }^{a}(\xx_{N} ;\theta ^{-1})$. 

\begin{theorem}
\label{thm:Laguerre intertwine}
Suppose $\theta > 0$ and $\alpha > -1$. 
Then, for any $\lambda $ with $l(\lambda )\leq N$ and $\xx_{N} \in \nWN $, we have
\begin{align}
\label{eq:Laguerre intertwine}
[\LtaNN L_{\lambda }^{a}(\cdot ;\theta^{-1} ) ] (\xx_N )
   =
   L_{\lambda }^{a + \theta }(\xx_{N} ;\theta^{-1} )c(\lambda, N,\theta \,; \alpha),
\end{align}
where $a := \theta (\alpha + 1) - 1$.
\end{theorem}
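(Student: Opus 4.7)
The plan is to reduce the identity to the eigenfunction property of Theorem~\ref{t:22} by exploiting the Rodrigue-type formula \eqref{:A3} for multivariate Laguerre polynomials. With the shorthand $C_{\alpha }:=D_{1}^{N}+\theta (\alpha +1)E_{0}^{N}$ used in the proof of \lref{c:43}, the relation $a+1=\theta (\alpha +1)$ lets us rewrite \eqref{:A3} as
\[
L_{\lambda }^{a}(\xx_{N};\theta ^{-1})
=\frac{(-1)^{|\lambda |}}{|\lambda |!\,P_{\lambda }^{\theta }(1_{N})}\,e^{-C_{\alpha }}P_{\lambda }^{\theta }(\xx_{N}),
\]
and, because $a+\theta +1=\theta (\alpha +2)$, an analogous expression with $C_{\alpha }$ replaced by $C_{\alpha +1}$ for $L_{\lambda }^{a+\theta }$. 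Hence \eqref{eq:Laguerre intertwine} is equivalent to the operator identity
\[
\LtaNN \,e^{-C_{\alpha }}P_{\lambda }^{\theta }=c(\lambda ,N,\theta \st \alpha )\,e^{-C_{\alpha +1}}P_{\lambda }^{\theta }
\]
at the point $\xx _{N}$.

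The next step is to verify this identity on a finite-dimensional stage. By \eqref{:40a} and \eqref{:40b}, the operator $C_{\alpha }$ preserves the finite-dimensional space $L(\{P_{\kappa }^{\theta }\}_{\kappa \subseteq \lambda })$, and by \tref{t:22} so does $\LtaNN $ (acting diagonally with eigenvalue $c(\kappa ,N,\theta \st \alpha )$ on $P_{\kappa }^{\theta }$). Consequently, $e^{-C_{\alpha }}$ and $e^{-C_{\alpha +1}}$ are genuine matrix exponentials on this space, the power series converge absolutely, and $\LtaNN $ can be exchanged with the series termwise. Expanding $e^{-C_{\alpha }}=\sum _{k\ge 0}(-C_{\alpha })^{k}/k!$ and invoking \lref{c:43} for each $k$ gives
\[
\LtaNN \,e^{-C_{\alpha }}P_{\lambda }^{\theta }
=\sum _{k\ge 0}\frac{(-1)^{k}}{k!}\LtaNN C_{\alpha }^{k}P_{\lambda }^{\theta }
=\sum _{k\ge 0}\frac{(-1)^{k}}{k!}C_{\alpha +1}^{k}\LtaNN P_{\lambda }^{\theta }
=e^{-C_{\alpha +1}}\LtaNN P_{\lambda }^{\theta }.
\]
A final application of \tref{t:22} replaces $\LtaNN P_{\lambda }^{\theta }$ on the right with $c(\lambda ,N,\theta \st \alpha )P_{\lambda }^{\theta }$, yielding the desired operator identity, which upon dividing by $|\lambda |!\,P_{\lambda }^{\theta }(1_{N})\cdot (-1)^{|\lambda |}$ and comparing with the Rodrigue formula for $L_{\lambda }^{a+\theta }$ gives \eqref{eq:Laguerre intertwine}.

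The only delicate point is the interchange of $\LtaNN $ with the infinite series defining $e^{-C_{\alpha }}$. Because everything lives in the finite-dimensional space $L(\{P_{\kappa }^{\theta }\}_{\kappa \subseteq \lambda })$ and both $\LtaNN $ and $C_{\alpha }$ act there by matrices, the exchange is immediate; no growth or domain issues arise. Thus the proof is a short unrolling of the Rodrigue formula combined with the generator-level intertwining \lref{c:43} and the eigenfunction property \tref{t:22}, with no substantial analytic obstacle.
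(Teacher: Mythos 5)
Your argument is correct and is essentially the paper's own proof of the theorem: both start from the Rodrigue-type formula \eqref{:A3}, commute $\LtaNN$ past each power of the generator via \lref{c:43}, and then evaluate $\LtaNN P_{\lambda}^{\theta}$ with \tref{t:22}. The paper writes the exponential as the finite sum $\sum_{k=0}^{|\lambda|}$ (noting that higher powers annihilate $P_{\lambda}^{\theta}$), whereas you invoke finite-dimensionality of $L(\{P_{\kappa}^{\theta}\}_{\kappa\subseteq\lambda})$; these are two phrasings of the same observation. The paper additionally sketches a second proof based on the explicit expansion \eqref{:A1} and the shift identity \eqref{:A5}, which avoids \lref{c:43} entirely, but your route matches the paper's primary proof.
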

% We remark again that $\theta $ is not a typo in this theorem.
\begin{proof}
Since the degree of $P_{\lambda }^{\theta}(\xx_N)$ is $|\lambda |$ and 
$$
\big(-D_{1}^N - (a + 1)E_{0}^N \big)^{k}P_{\lambda }^{\theta}(\xx_N) = 0 \quad \text{for } k > |\lambda |,
$$
we have
\begin{align*}
L_{\lambda }^{a}(\xx_{N} ;\theta^{-1})
   &=
   \frac{(-1)^{|\lambda |}}{|\lambda |!}  \exp{\big(-D_{1}^N - (a + 1)E_{0}^N \big)}\frac{P_{\lambda }^{\theta}(\xx_N)}{P_{\lambda }^{\theta}(1_N)} \\
   &=
   \frac{(-1)^{|\lambda |}}{|\lambda |!} 
   \sum_{k = 0}^{|\lambda |}
       \frac{\big(-D_{1}^N - \theta (\alpha + 1)E_{0}^N \big)^{k}}{k!}
   \frac{P_{\lambda }^{\theta }(\xx_N)}{P_{\lambda }^{\theta }(1_N)}
\end{align*}
from \eqref{:A3}.
% By putting 
% $$
% a + 1 = \theta (\alpha + 1)
% $$
% and
By applying \lref{c:43} and \tref{t:22}, we obtain
\begin{align*}
[\LtaNN L_{\lambda }^{a}(\cdot ;\theta^{-1})] (\xx_{N})
   &=
   \frac{(-1)^{|\lambda |}}{|\lambda |!} 
   \sum_{k = 0}^{|\lambda |}
       \frac{\LtaNN\big(-D_{1}^N - \theta (\alpha + 1)E_{0}^N \big)^{k}}{k!}
   \frac{P_{\lambda }^{\theta }(\xx_N)}{P_{\lambda }^{\theta }(1_N)} \\
   &=
   \frac{(-1)^{|\lambda |}}{|\lambda |!} 
   \sum_{k = 0}^{|\lambda |}
       \frac{\big(-D_{1}^N - \theta (\alpha + 2)E_{0}^N \big)^{k}\LtaNN}{k!}
   \frac{P_{\lambda }^{\theta }(\xx_N)}{P_{\lambda }^{\theta }(1_N)} \\
   &=
   \frac{(-1)^{|\lambda |}}{|\lambda |!} 
   \sum_{k = 0}^{|\lambda |}
       \frac{\big(-D_{1}^N - (a  + \theta + 1)E_{0}^N \big)^{k}}{k!}
   \frac{P_{\lambda }^{\theta }(\xx_N)}{P_{\lambda }^{\theta }(1_N)}
   c(\lambda, N ,\theta \,; \alpha ) \\
   &=
   L_{\lambda }^{a + \theta }(\xx_{N} ;\theta^{-1} )
   c(\lambda, N,\theta \,; \alpha).
\end{align*}
\end{proof}

In the above proof, we use the intertwining relation \eqref{:43a}. 
On the other hand, we give another proof of (\ref{eq:Laguerre intertwine}) without \lref{c:43}.
In fact, \tref{t:22} is equivalent to the following formula:
\begin{align}
\label{:A5}
[\LtaNN P_{\lambda }^{\theta }](\xx_{N})
   \prod_{i = 1}^{N}  \frac{1}{((N + \alpha + 1 - i) \theta)_{\lambda_{i}}}
    =
    P_{\lambda }^{\theta }(\xx_{N})
    \prod_{i=1}^{N}
    \frac{1}{((N + \alpha + 2 - i) \theta)_{\lambda_{i}}}.
\end{align}
Then, from the definition \eqref{:A1} and the formula \eqref{:A5}, we have the conclusion \eqref{eq:Laguerre intertwine}:
\begin{align*}&
[\LtaNN L_{\lambda }^{a}(\cdot  ;\theta^{-1})](\xx_{N})
   \\&=
   \frac{1}{|\lambda |!}
   \sum_{\mu \subseteq \lambda }
      (-1)^{|\mu |}
      \binom{\lambda }{\mu }_{\theta }
      \prod_{i = 1}^{N}
         \frac{((N + \alpha + 1 - i) \theta )_{\lambda_{i}}}{((N + \alpha + 1 - i) \theta )_{\mu_{i}}}  \frac{[\LtaNN P_{\mu }^{\theta }](\xx_N)}{P_{\mu }^{\theta}(1_N)}\\
   &=
   \frac{1}{|\lambda |!}
   \sum_{\mu \subseteq \lambda }
      (-1)^{|\mu |}
      \binom{\lambda }{\mu }_{\theta }
      \prod_{i = 1}^{N}
         \frac{((N + \alpha + 1 - i) \theta )_{\lambda_{i}}}{((N + \alpha + 2 - i) \theta )_{\mu_{i}}}    \frac{P_{\mu }^{\theta }(\xx_N)}{P_{\mu }^{\theta}(1_N)}
\\
   &=
   \frac{1}{|\lambda |!}
   \sum_{\mu \subseteq \lambda }
      (-1)^{|\mu |}
      \binom{\lambda }{\mu }_{\theta }
      \prod_{i = 1}^{N}
         \frac{(a + \theta + 1 + \theta (N - i))_{\lambda_{i}}}{(a + \theta + 1 + \theta (N - i))_{\mu_{i}}}
      \frac{P_{\mu }^{\theta }(\xx_N)}{P_{\mu }^{\theta}(1_N)} 
      %\\& \quad \cdot 
      \prod_{i = 1}^{N}
         \frac{((N + \alpha + 1 - i) \theta )_{\lambda_{i}}}{((N + \alpha + 2 - i) \theta )_{\lambda_{i}}} \\
   &=
   L_{\lambda }^{a + \theta }(\xx_{N} ;\theta^{-1} )
   c(\lambda, N,\theta \,; \alpha).
\end{align*}
%This proof does not use \lref{c:43}.
%
%From this argument, we give alternative proofs of the intertwining relations \eqref{:41a} and \eqref{:43a}. 
Hence, \tref{thm:Laguerre intertwine} is equivalent to \lref{c:43} under the assumptions of \tref{t:22} and the Rodrigue type formula \eqref{:A3}.

From the formula (\ref{:A5}), we also have a parameter shift formula of multivariate hypergeometric functions associated with Jack polynomials (see \cite{BaF97}, \cite{Kan93}, \cite[Chapter 3]{ViK95}):
% we generalise \tref{thm:Laguerre intertwine} to a parameter shift formula of multivariate hypergeometric functions associated with Jack polynomials (see \cite{BaF97}, \cite[Chapter 3]{ViK95}):
\begin{align}\label{:A7}
  & {_{p}F_q}^{}\Big(\begin{matrix} a_1 , \ldots, a_p  \\ b_1 , \ldots , b_q  \end{matrix};\xx_{N} \Big):=
    \sum_{n = 0}^{\infty }\sum_{|\lambda | = n}
    \frac{(a_{1} )_{\lambda } \cdots (a_p )_{\lambda }}{(b_1 )_{\lambda} \cdots (b_{q} )_{\lambda}}
    \frac{C_{\lambda }^{\theta ^{-1}}(\xx_{N})}{n!}
,\end{align}
where $(a)_{\lambda}=\prod_{i=1}^{l(\lambda )} (a-\theta (i-1))_{\lambda_{i}}$.

\begin{proposition}
We assume that \eqref{:A7} converges.  
For any $1 \leq j \leq q$, under the condition $b_j =(\beta_j +N)\theta $, we have  
\begin{align}
  \Big[ \Lambda_{\theta, \beta_{j}, N}^{N} \Big\{  {_{p}F_q}^{} \Big(\begin{matrix} a_1 , \ldots, a_p  \\ b_1   , \ldots b_q  \end{matrix}; \cdot  \Big) \Big\} \Big]  (\xx_{N})=
     {_{p}F_q}^{}\Big(\begin{matrix} a_1  , \ldots, a_p \\ b_1 , \ldots ,b_{j-1},  b_j+\theta ,b_{j+1} , \ldots ,b_q \end{matrix};\xx_{N} \Big).
  \end{align}  
\end{proposition}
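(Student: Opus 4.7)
The plan is to evaluate $\Lambda_{\theta,\beta_j,N}^N$ on each Jack polynomial term of the series defining ${}_pF_q$ and observe that Theorem \ref{t:22} produces precisely the desired parameter shift on $b_j$. Note that $C_{\lambda}^{\theta^{-1}}(\xx_N) = \bigl(C_{\lambda}^{\theta^{-1}}(1_N)/P_{\lambda}^{\theta}(1_N)\bigr) P_{\lambda}^{\theta}(\xx_N)$, so $C_{\lambda}^{\theta^{-1}}$ is an eigenfunction of $\Lambda_{\theta,\alpha,N}^N$ with the same eigenvalue as $P_{\lambda}^{\theta}$.

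The key algebraic identity is the Pochhammer reformulation of \eqref{:A5}. Under the substitution $b_j=(\beta_j+N)\theta$ and the definition $(a)_{\lambda}=\prod_{i}(a-\theta(i-1))_{\lambda_i}$, one checks directly that
\begin{align*}
(b_j)_{\lambda} &= \prod_{i=1}^{l(\lambda)} \bigl((N+\beta_j+1-i)\theta\bigr)_{\lambda_i}, \\
(b_j+\theta)_{\lambda} &= \prod_{i=1}^{l(\lambda)} \bigl((N+\beta_j+2-i)\theta\bigr)_{\lambda_i}.
\end{align*}
Consequently, \eqref{:A5} applied with $\alpha=\beta_j$ gives the clean eigenvalue relation
\begin{equation*}
[\Lambda_{\theta,\beta_j,N}^N C_{\lambda}^{\theta^{-1}}](\xx_N) = \frac{(b_j)_{\lambda}}{(b_j+\theta)_{\lambda}}\, C_{\lambda}^{\theta^{-1}}(\xx_N).
\end{equation*}

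Applying $\Lambda_{\theta,\beta_j,N}^N$ termwise to the series \eqref{:A7} then multiplies the coefficient of each $C_{\lambda}^{\theta^{-1}}(\xx_N)/n!$ by $(b_j)_{\lambda}/(b_j+\theta)_{\lambda}$; this cancels the factor $(b_j)_{\lambda}$ in the denominator and replaces it by $(b_j+\theta)_{\lambda}$, which is exactly the right-hand side of the claimed identity.

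The one step that requires care, and which I expect to be the main technical point, is justifying the interchange of $\Lambda_{\theta,\beta_j,N}^N$ with the infinite sum. For any fixed $\xx_N\in\intnWN$, the kernel $\Lambda_{\theta,\beta_j,N}^N(\xx_N,\cdot)$ is a probability measure supported on the compact set $\nWNN(\xx_N)$, so partial sums of \eqref{:A7} are uniformly bounded on this support by the convergence hypothesis (or, more robustly, by passing to a slightly larger polydisc on which the series converges absolutely and using that each $C_{\lambda}^{\theta^{-1}}$ is a polynomial). Dominated convergence then legitimises the termwise application of $\Lambda_{\theta,\beta_j,N}^N$ and completes the proof.
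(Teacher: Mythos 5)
Your proposal is correct and is exactly the intended argument: the paper derives the proposition directly from the reformulation \eqref{:A5} of Theorem~\ref{t:22}, which gives $[\LtaNN P_{\lambda }^{\theta }](\xx_{N}) = P_{\lambda }^{\theta }(\xx_{N})\prod_{i}((N+\alpha+1-i)\theta)_{\lambda_i}/((N+\alpha+2-i)\theta)_{\lambda_i}$, applied termwise to the series with $\alpha=\beta_j$ and the Pochhammer rewriting $(b_j)_\lambda=\prod_i((N+\beta_j+1-i)\theta)_{\lambda_i}$. Your added remark on justifying the term-by-term application (compact support of $\LtaNN(\xx_N,\cdot)$, dominated convergence) is a reasonable treatment of a point the paper handles implicitly via the blanket convergence assumption.
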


\section{Intertwining of $\beta$-Laguerre Ornstein-Uhlenbeck  processes }\label{appendix:B}
Let $\beta \ge 1 $ and $\alpha > -1$ as before.
We consider the $N$-dimensional stochastic differential equation  
\begin{align}\label{:B1}
  dX_t^{N,i} &=\sqrt{2 X_t^{N, i} } dB_t^i  -X_t^i dt +\frac{\beta }{2 }\Big( \alpha +1 +\sum_{1\le j\le N, j\neq i}\frac{2 X_t^{N, i} }{X_t^{N, i} -X_t^{N,j} } \Big)dt, \qquad 1\le i\le N 
,\end{align}
where $(B^i)_{i=1}^N $ is the $N$-dimensional Brownian motion.
By slightly modifying the proof of \cite[Corollary 6.5]{GrM14}, we see that the equation \eqref{:B1} has a unique strong solution with no collisions and no explosions.
We call the unique solution to \eqref{:B1} the $N$-dimensional  $\beta$-Laguerre Ornstein-Uhlenbeck process for parameter $\alpha$, and define $\mathfrak{T}_{\theta, \alpha,t}^{N}$ as the associated Markov semigroup.
The infinitesimal generator associated with \eqref{:B1} is given by
\begin{align}\notag
  \mathfrak{A}_{\theta, \alpha }^N := D_1^N + \theta(\alpha +1) E_0^N -E_1^N
.\end{align}

The $\beta$-Laguerre Ornstein-Uhlenbeck processes satisfy shifted intertwining relations in analogy to \pref{p:14}.
\begin{proposition}
Assume that $\theta \ge 1/2$ and $\alpha >-1$.  
Then, for any $N\in\N$ and $t\ge 0$, we have 
  \begin{align}\label{:B2}
\mathfrak{T}_{\theta, \alpha ,t}^{N+1 } \LtNNp  =\LtNNp \mathfrak{T}_{\theta, \alpha +1 ,t}^{N } 
  .\end{align}
\begin{proof}
This was essentially proved in \cite{Ass19}. 
Actually, $E_1^{N+1} \LtNNp P_{\lambda }^\theta = \LtNNp E_1^N P_{\lambda }^\theta $ holds from \eqref{:22a} and \eqref{:40c}.
Combining this with \cite[(21)]{Ass19},  we have $ \mathfrak{A }_{\theta, \alpha }^{N+1 } \LtNNp P_{\lambda }^\theta = \LtNNp \mathfrak{A }_{\theta, \alpha+1 }^{N } P_{\lambda }^\theta $.
Furthermore, an exponential estimate of the $\beta$-Laguerre Ornstein-Uhlenbeck processes can be derived from corresponding estimate for the $\beta$-Laguerre processes with a comparison theorem of stochastic differential equations.
Therefore, the argument in \cite{Ass19}, with the above modifications, shows that \eqref{:B2} holds.
\end{proof}
\end{proposition}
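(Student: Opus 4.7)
The plan is to adapt the Jack-polynomial/semigroup method used in \cite{Ass19} for \pref{p:14}, and in \sref{s:5} for \tref{t:17}, to the Ornstein--Uhlenbeck setting. The key structural observation is that the generators differ by a degree-preserving term,
\begin{align*}
\mathfrak{A}_{\theta,\alpha}^N \;=\; A_{\theta,\alpha}^N - E_1^N, \qquad A_{\theta,\alpha}^N \;=\; D_1^N + \theta(\alpha+1) E_0^N,
\end{align*}
so $\mathfrak{A}_{\theta,\alpha}^N$ still preserves each finite-dimensional subspace $L(\{P_\kappa^\theta\}_{\kappa \subseteq \lambda})$ and the matrix bookkeeping used in \sref{s:5} remains available.

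First, I would establish the generator-level shifted intertwining
\begin{align*}
\mathfrak{A}_{\theta,\alpha}^{N+1}\,\LtNNp P_\lambda^\theta \;=\; \LtNNp\,\mathfrak{A}_{\theta,\alpha+1}^N P_\lambda^\theta
\end{align*}
for every partition $\lambda$ with $l(\lambda) \le N$. The identity $A_{\theta,\alpha}^{N+1}\LtNNp P_\lambda^\theta = \LtNNp A_{\theta,\alpha+1}^N P_\lambda^\theta$ is already part of the Jack-polynomial computation used to prove \pref{p:14} in \cite{Ass19}, so the only new ingredient is $E_1^{N+1}\LtNNp P_\lambda^\theta = \LtNNp E_1^N P_\lambda^\theta$. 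This is immediate from \lref{l:21} combined with \eqref{:40c}: both sides reduce to $|\lambda|\,c(\lambda,N,\theta)\,P_\lambda^\theta$.

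Second, I would lift to the semigroup level by repeating the matrix-exponentiation argument at the end of \sref{s:5}. Restricting $\mathfrak{A}_{\theta,\alpha}^{N+1}$, $\mathfrak{A}_{\theta,\alpha+1}^{N}$, and $\LtNNp$ to $L(\{P_\kappa^\theta\}_{\kappa \subseteq \lambda})$ turns the generator identity into a matrix identity; exponentiating and combining with the It\^o formula applied to $P_\lambda^\theta(\XX^N)$ identifies these matrix exponentials with the actions of $\mathfrak{T}_{\theta,\alpha,t}^N$ on Jack polynomials. Symmetrization on $\nWN$ together with the moment-determinacy criterion of \cite{deJ03} then promotes the equality on symmetric polynomials to the identity of Markov kernels \eqref{:B2}.

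The main obstacle, as in the Laguerre case, is the exponential moment estimate $E_{\xx_N}[e^{\ep \|\XX_t^N\|}] < \infty$, which is needed both to justify the It\^o computation and to invoke \cite[Theorem~1.3]{deJ03}. For the $\beta$-Laguerre process this is established in \cite[Step~2, pp.~1888--1889]{Ass19}. In the $\beta$-Laguerre Ornstein--Uhlenbeck case the extra drift $-X_t^i\,dt$ in \eqref{:B1} is dissipative, so intuitively the bound can only improve; making this rigorous amounts to a pathwise comparison of \eqref{:B1} and \eqref{:11d} driven by a common Brownian motion. Some care is required because of the singular repulsion terms, but since these are identical in the two systems and only the additional linear drift is new, a standard SDE comparison yields pathwise domination of the Ornstein--Uhlenbeck coordinates by the Laguerre ones, and the exponential moment bound descends. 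With this estimate in place, the remainder of the argument proceeds verbatim as in \sref{s:5} and yields \eqref{:B2}.
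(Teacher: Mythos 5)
Your proposal matches the paper's proof essentially point for point: you decompose $\mathfrak{A}_{\theta,\alpha}^N = A_{\theta,\alpha}^N - E_1^N$, supply the one new generator identity $E_1^{N+1}\LtNNp P_\lambda^\theta = \LtNNp E_1^N P_\lambda^\theta$ from \eqref{:22a} and \eqref{:40c}, combine it with the known Laguerre generator intertwining from \cite{Ass19}, and obtain the needed exponential moment bound by SDE comparison with the Laguerre process before lifting to semigroups via the matrix-exponentiation and moment-determinacy argument. This is exactly the route the paper takes.
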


The following theorem can be proved by the same method in \sref{s:5} via Jack polynomials.
On the other hand, by using multivariate Laguerre polynomials instead of Jack polynomials, the proof becomes somewhat simpler.
\begin{theorem}
Suppose $\theta \ge 1/2 $ and $\alpha >-1$.
Then, for any $ N\in\N$ and $ t\ge 0$,  we have 
\begin{align}\label{:B3}
\mathfrak{T} _{\theta, \alpha+1, t}^{N} \LtaNN &= \LtaNN \mathfrak{T}_{\theta, \alpha, t}^{N} 
  .\end{align}  
Furthermore, we have 
\begin{align}\label{:B4}
  \mathfrak{T} _{\theta, \alpha, t}^{N+1} \LtaNNp &= \LtaNNp  \mathfrak{T}_{\theta, \alpha, t}^{N} 
.\end{align} 
\begin{proof}
The equation \eqref{:B4} results from \eqref{:B2} and \eqref{:B3} immediately.
% To see \eqref{:B3}, we first see the intertwining relation in the level of generators  
% \begin{align*}
% [ \mathfrak{A}_{\theta, \alpha+1 }^N \LtaNN L_{\lambda }^{\theta(\alpha +1)-1} (\cdot ; \theta^{-1})] (\xx_N ) 
% &=  \mathfrak{A}_{\theta, \alpha+1 }^N L_{\lambda }^{\theta(\alpha +2)-1} (\xx_N ; \theta^{-1}) c(\lambda, N,\theta;\alpha) 
% \\&
% =-L_{\lambda }^{\theta(\alpha +2)-1} (\xx_N ; \theta^{-1}) |\lambda | c(\lambda, N,\theta;\alpha) 
% \\&
% =-|\lambda| [\LtaNN L_{\lambda }^{\theta(\alpha +1)-1} (\cdot ; \theta^{-1})](\xx_N)
% \\&
% =[ \LtaNN \mathfrak{A}_{\theta, \alpha+1 }^N L_{\lambda }^{\theta(\alpha +1)-1} (\cdot ; \theta^{-1})] (\xx_N ) 
% \end{align*}
% from \eqref{:A2} and \eqref{eq:Laguerre intertwine}.
% it is sufficient to show 
% \begin{align}
%   \mathfrak{T} _{\theta, \alpha+1, t}^{N} \LtaNN L_{\lambda }^{\theta(\alpha+1)-1}(\cdot ; \theta^{-1})&= \LtaNN  \mathfrak{T}_{\theta, \alpha, t}^{N} L_{\lambda }^{\theta(\alpha+1)-1}(\cdot ; \theta^{-1}) 
%     .\end{align}  
To establish \eqref{:B3}, by the same argument as in \lref{l:51} with \eqref{:A2}, we get
\begin{align}\label{:B6}
[\mathfrak{T}_{\theta, \alpha, t}^{N} L_{\lambda }^{\theta(\alpha+1)-1}(\cdot ; \theta^{-1}) ](\xx_N ) &= L_{\lambda }^{\theta(\alpha+1)-1}(\xx_N ;\theta^{-1}) + \int_0^t [\mathfrak{T}_{\theta, \alpha, s}^{N} \mathfrak{A}_{\theta, \alpha }^{N} L_{\lambda }^{\theta(\alpha+1)-1}(\cdot ; \theta^{-1}) ] (\xx_N ) ds
 \\& \notag
=L_{\lambda }^{\theta(\alpha+1)-1}(\xx_N; \theta^{-1}) -|\lambda | \int_0^t [\mathfrak{T}_{\theta, \alpha, s}^{N} L_{\lambda }^{\theta(\alpha+1)-1} (\cdot ; \theta^{-1})](\xx_N ) ds
,\end{align}  
which yields 
\begin{align}\label{:B7}
  [\mathfrak{T}_{\theta, \alpha, t}^{N} L_{\lambda }^{\theta(\alpha+1)-1}(\cdot ; \theta^{-1})] (\xx_N ) &= e^{-|\lambda|t } L_{\lambda }^{\theta(\alpha+1)-1}(\xx_N; \theta^{-1})
.\end{align} 
Combining the equation \eqref{:B7} with \eqref{eq:Laguerre intertwine}, we obtain 
\begin{align}\label{:B3a}
 [\LtaNN  \mathfrak{T}_{\theta, \alpha, t}^{N} L_{\lambda }^{\theta(\alpha+1)-1}(\cdot ; \theta^{-1}) ](\xx_N ) &=e^{-|\lambda|t } L_{\lambda }^{\theta(\alpha+2)-1}(\xx_N; \theta^{-1}) c(\lambda,N,\theta; \alpha)
\\& \notag
=[\mathfrak{T} _{\theta, \alpha+1, t}^{N} L_{\lambda }^{\theta(\alpha+2)-1}(\cdot ; \theta^{-1})  ] (\xx_N) c(\lambda,N,\theta; \alpha)
\\& \notag
=[\mathfrak{T} _{\theta, \alpha+1, t}^{N} \LtaNN L_{\lambda }^{\theta(\alpha+1)-1}(\cdot ; \theta^{-1})](\xx_N )
.\end{align}
Because any symmetric polynomial is a linear combination of multivariate Laguerre polynomials, the equation \eqref{:B3a} enables us to apply the same argument in the proof of \tref{t:17}, and thus we conclude \eqref{:B3}.
\end{proof}
\end{theorem}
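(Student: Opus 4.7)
The plan is to establish \eqref{:B3} first and then deduce \eqref{:B4} as an immediate consequence. Indeed, combining \eqref{:B2}, \eqref{:B3}, and the factorisation $\LtaNNp = \LtNNp \LtaNN$ of \eqref{:11c}, we get
\begin{align*}
\mathfrak{T}_{\theta,\alpha,t}^{N+1} \LtaNNp
= \mathfrak{T}_{\theta,\alpha,t}^{N+1} \LtNNp \LtaNN
= \LtNNp \mathfrak{T}_{\theta,\alpha+1,t}^{N} \LtaNN
= \LtNNp \LtaNN \mathfrak{T}_{\theta,\alpha,t}^{N}
= \LtaNNp \mathfrak{T}_{\theta,\alpha,t}^{N},
\end{align*}
which exactly mirrors the derivation of \tref{t:16} from \tref{t:17}. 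So the real task is \eqref{:B3}.

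For \eqref{:B3} I would follow the strategy of \sref{s:5}, but replace Jack polynomials by the multivariate Laguerre polynomials $L_\lambda^a(\xx_N;\theta^{-1})$ of Appendix~\ref{appendix:A}. The decisive simplification is the eigenvalue identity \eqref{:A2}: with $a=\theta(\alpha+1)-1$, one has $\mathfrak{A}_{\theta,\alpha}^N L_\lambda^a(\cdot;\theta^{-1}) = -|\lambda|\, L_\lambda^a(\cdot;\theta^{-1})$, i.e.\ the Laguerre polynomial is a \emph{genuine} eigenfunction of the generator (not just modulo lower-order terms, as was the case for $P_\lambda^\theta$ under the plain Laguerre generator). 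Applying It\^o's formula to $L_\lambda^a(\XX_t^N;\theta^{-1})$ and taking expectations therefore yields the one-line identity
\begin{align*}
[\mathfrak{T}_{\theta,\alpha,t}^N L_\lambda^{\theta(\alpha+1)-1}(\cdot;\theta^{-1})](\xx_N)
= e^{-|\lambda|t}\, L_\lambda^{\theta(\alpha+1)-1}(\xx_N;\theta^{-1}),
\end{align*}
bypassing the matrix-exponential bookkeeping of \lref{l:51}.

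Combining the last display with the parameter-shift relation of \tref{thm:Laguerre intertwine}, which reads $[\LtaNN L_\lambda^a(\cdot;\theta^{-1})](\xx_N) = c(\lambda,N,\theta;\alpha)\, L_\lambda^{a+\theta}(\xx_N;\theta^{-1})$, I would compute both compositions on the test function $L_\lambda^{\theta(\alpha+1)-1}(\cdot;\theta^{-1})$. Each of them returns $e^{-|\lambda|t} c(\lambda,N,\theta;\alpha)\, L_\lambda^{\theta(\alpha+2)-1}(\xx_N;\theta^{-1})$, using for the right-hand side that the shifted polynomial $L_\lambda^{\theta(\alpha+2)-1}$ is precisely the eigenfunction of $\mathfrak{A}_{\theta,\alpha+1}^N$ with eigenvalue $-|\lambda|$. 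Since the $\{L_\lambda^a(\cdot;\theta^{-1})\}_{\lambda}$ are a basis of the algebra of symmetric polynomials in $N$ variables, the intertwining identity extends to all symmetric polynomials. I would then finish exactly as in the proof of \tref{t:17}: symmetrise the two candidate measures on $[0,\infty)^N$, note that all their mixed moments agree, and invoke \cite[Theorem~1.3]{deJ03} to promote the moment identity to equality of probability measures on $\nWN$.

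The main obstacle I foresee is technical: one needs the exponential-moment bound $E_{\xx_N}[e^{\ep\|\XX_t^N\|}]<\infty$ for the Ornstein--Uhlenbeck version of the process to justify taking expectations in It\^o's formula and to apply \cite{deJ03}. However, the extra drift term $-X_t^i\,dt$ in \eqref{:B1} only improves integrability relative to the ordinary $\beta$-Laguerre process, so a pathwise comparison with the process of \eqref{:11d} should reduce the estimate directly to the bound already available from Step~2 of \cite[pp.~1888--1889]{Ass19}, as indicated in the proof of the preceding proposition.
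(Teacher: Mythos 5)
Your proposal is correct and follows essentially the same route as the paper: derive \eqref{:B4} from \eqref{:B2} and \eqref{:B3} by the same factorisation argument used for \tref{t:16}, establish \eqref{:B3} by noting that via \eqref{:A2} the multivariate Laguerre polynomials $L_\lambda^{\theta(\alpha+1)-1}(\cdot;\theta^{-1})$ are genuine eigenfunctions of the generator $\mathfrak{A}_{\theta,\alpha}^N$, apply It\^o's formula to obtain $\mathfrak{T}_{\theta,\alpha,t}^N L_\lambda^{\theta(\alpha+1)-1}(\cdot;\theta^{-1}) = e^{-|\lambda|t}L_\lambda^{\theta(\alpha+1)-1}(\cdot;\theta^{-1})$, and combine with \tref{thm:Laguerre intertwine} before passing from the polynomial basis to measures via moments as in the proof of \tref{t:17}. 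The only cosmetic difference is that the paper proves \eqref{:B4} first and \eqref{:B3} second, and your remark about pathwise comparison for the exponential moment bound is the same observation the paper records in the preceding proposition's proof.
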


\begin{remark}
When $\theta =1$, the equations \eqref{:B3} and \eqref{:B4} were proved in \cite{BuK} by a different approach.
\end{remark}

\section*{Acknowledgement}
K. Y. is supported by JSPS KAKENHI Grant Number 21K13812.
S. G. is supported by JST CREST Grant Number JP19209317 and JSPS KAKENHI Grant Number
21K13808.

\end{document}